\newtheoremstyle{theorem}{6pt}{6pt}{\itshape}{}{\bfseries}{.}{.5em}{}
\newtheoremstyle{definition}{6pt}{6pt}{\upshape}{}{\bfseries}{.}{.5em}{}
\theoremstyle{theorem}
\newtheorem{theorem}{Theorem}[section]
\newaliascnt{corollary}{theorem}
\newaliascnt{lemma}{theorem}
\newtheorem{lemma}[lemma]{Lemma}
\newaliascnt{sublemma}{theorem}
\theoremstyle{definition}
\newtheorem{remark}{Remark}[section]
\newaliascnt{proposition}{theorem}
\newtheorem{proposition}[proposition]{Proposition}
\newcommand{\dif}{{\mathrm d}}
\newcommand{\bn}{\begin{eqnarray}}
\newcommand{\en}{\end{eqnarray}}
\newcommand{\bnn}{\begin{eqnarray*}}
\newcommand{\enn}{\end{eqnarray*}}
\renewcommand{\div}{ {\rm div }  }
\newcommand{\al}{\alpha}
\newcommand{\calD}{\mathcal{D}}
\newcommand{\calE}{\mathcal{E}}
\newcommand{\D}{\nabla}
\numberwithin{equation}{section}
\begin{document}

\begin{frontmatter}

\title{Global well-posedness and exponential decay of strong solution for the \\ three-dimensional inhomogeneous incompressible micropolar equations with \\ density-dependent transport coefficients and large initial data}

\author[label1]{Peng Lu}
\address[label1]{Department of Mathematics, Zhejiang Sci-Tech University, Hangzhou, Zhejiang 310018, P.R.China;}
\ead{plu25@zstu.edu.cn}

\author[label2]{Yuanyuan Qiao\corref{cor2}}
\address[label2]{School of Mathematics and Information Science, Henan Polytechnic University, Jiaozuo, Henan 454000, P.R.China;}
\ead{yyqiao@hpu.edu.cn}

\cortext[cor2]{Corresponding author. }

\begin{abstract}
In this paper, we consider the Dirichlet problem of three-dimensional inhomogeneous incompressible micropolar equations with density-dependent viscosity. Under the assumption that the coefficients are power functions of the density, we establish the global existence of strong solutions as long as the initial density is linear equivalent to a large constant state. There is no restriction on the size of initial velocity and micro-rotational velocity. As a by-product, we prove the exponential decay for the solution.
\end{abstract}

\begin{keyword}
Inhomogeneous incompressible micropolar equations, density-dependent transport coefficients, global strong solutions, large initial data.
\end{keyword}
\end{frontmatter}

\section{Introduction}

Let $\Omega\subset\mathbb{R}^3$ be a bounded domain with smooth boundary. In this paper, we are concerned with an initial boundary-value problem of three-dimensional (3D for short) inhomogeneous micropolar fluid equations with density-dependent viscosity in $\Omega\times\mathbb{R}_+$:
\begin{equation}\label{ins}
\left\{\begin{array}{l}
\rho_t+\mathrm{div}(\rho u)=0, \\
(\rho u)_t+\mathrm{div}(\rho u\otimes u)+\nabla P-2\mathrm{div}((\mu(\rho)+\xi(\rho) )\calD(u))=2\xi(\rho)\D\times w, \\
(\rho w)_t+\mathrm{div}(\rho u\otimes w)+4\xi(\rho)w-\div\left(2\eta(\rho)\calD(w)\right)-\D(\lambda(\rho)\div w)=2\xi(\rho)\D\times u, \\
\div u=0,
\end{array}\right.
\end{equation}
where $t\geq 0$ and $x=(x_1,x_2,x_3)\in\Omega$ are time and space variables, respectively. $\rho$, $u=(u^1, u^2, u^3)$, $w=(w^1, w^2, w^3)$ and $P$ represent the density, velocity, micro-rotational velocity, and the pressure of the fluid, respectively.
\begin{equation}
\calD(u)=\frac12 \left(\nabla u+(\nabla u)^\top\right), \quad \calD(w)=\frac12 \left(\nabla w+(\nabla w)^\top\right),
\end{equation}
is the deformation tensor. $\mu(\rho)$ stands for the viscosity and is a function of the density satisfying
\begin{equation}\label{vis-d}
\mu(\rho)=\bar\mu\rho^\alpha,\ \bar\mu>0,\ \alpha\geq 0. 
\end{equation}
The coefficients $\xi(\rho)$, $\eta(\rho)$, $\lambda(\rho)$ are assumed to satisfy
\begin{equation}\label{other-coeffi}
\eta(\rho)=\bar\eta\rho^\al, ~ \lambda(\rho)=\bar\lambda\rho^\al, ~ \xi(\rho)=\bar\xi\rho^\beta, \quad \bar\xi>0, \quad \bar\eta>0, \quad 2\bar\eta+3\bar\lambda\geq 0, ~ \beta\geq 0.
\end{equation}

In this paper, we study the initial boundary value problem to the system \eqref{ins}--\eqref{other-coeffi} with the following initial data and Dirichlet boundary condition:
\begin{equation}\label{bc}
\begin{aligned}
& (\rho, u, w)(x,0)=(\rho_0(x), u_0(x), w_0(x)), & \text{in} ~ & \Omega, \\
& u(x,t)=0, ~ w(x,t)=0, & \mathrm{on} ~ & \partial\Omega\times [0,T].
\end{aligned}
\end{equation}

The micropolar equations were first proposed by Eringen \cite{MR204005} in 1966, which can describe many phenomena that appear in a large number of complex fluids such as suspensions, animal blood, and liquid crystals. For more background on micropolar fluids, we refer to \cite{MR4157866, MR3944649, MR1711268} and the references therein. Because of their physical applications and mathematical significance, the well-posedness problem of the micropolar equations has attracted considerable attention in recent years. In 2019, Zhang and Zhu \cite{MR3950716} considered the 3D Cauchy problem with constant viscosity, and proved the global well-posedness of strong and classical solution under the condition that $\bar\mu$ is large enough or the initial data is sufficiently small. In 2020, Ye \cite{MR4026901} studied the large time behavior of the global solution obtained in \cite{MR3950716}. In 2023, Li \cite{MR4533917} established a weak Serrin-type blowup criterion for the 3D Cauchy problem, based on which the author obtained a unique global strong solution. When the viscosity depends on the density, Zhong \cite{MR4514901} studied the 3D initial-boundary-value problem, established the global existence and uniqueness of strong solutions when the initial energy is sufficiently small. Moreover, the velocity and the micro-rotational velocity converge exponentially to zero in $H^2$ as time goes to infinity. Zhou and Tang \cite{MR4672993} proved the global well-posedness of strong solution to the 3D Cauchy problem, provided that the initial mass is sufficiently small. Moreover, the gradients of velocity and micro-rotational velocity converge exponentially to zero in $H^1$ as time goes to infinity. For more related results in two-dimensional case, we refer to \cite{MR4275463, MR4175101, MR4227470, MR4470525}.

If there is no microstructure $(\xi = 0 ~ \text{and} ~ w = 0)$, the system \eqref{ins} reduces to inhomogeneous incompressible Navier–Stokes equations. When the viscosity is constant, Choe and Kim \cite{MR1986066} proved the well-posedness of local strong solutions to the Cauchy problem or the initial boundary value problem. In 2008, Germain \cite{MR2438424} proved weak-strong uniqueness to the Cauchy problem. In 2013, Craig, Huang and Wang \cite{MR3127017} established the global well-posedness of strong solutions for initial data with small $\dot{H}^{\frac{1}{2}}$-norm. In 2016, Chen, Zhang and Zhao \cite{MR3487274} proved the global well-posedness of solution under the condition that the initial density is bounded and bounded away from zero, and the initial velocity is small enough in $H^s$ for some $s>1/2$. In 2021, Guo, Wang and Xie \cite{MR4347323} established global strong axisymmetric solutions in the exterior of a cylinder subject to the Dirichlet boundary conditions, where the vacuum is allowed. When the viscosity depends on the density, Huang and Wang \cite{MR3200422} studied the 2D initial boundary value problem in bounded domains, and proved that strong solutions exist globally when $\|\D\mu(\rho_0)\|_{L^p}$ is sufficiently small, even in the presence of vacuum. In 2015, Liang \cite{MR3306351} established the local well-posedness of strong solution for the 2D Cauchy problem, Huang and Wang \cite{MR3345862} and Zhang \cite{MR3349417} proved that 3D initial boundary value problem admits unique strong solution provided that $\|\D u_0\|_{L^2}$ is sufficiently small. For more related results in critical Besov spaces, we refer to \cite{MR4334732, MR2663713, MR3073212, MR4759529, MR3412410, MR3369261, MR3619877, MR2957705, MR4580531, MR3056619, MR2889168, MR4093854, MR3169743, MR4568414, MR4454275, MR3582198, MR3721433, MR4056004} and the references therein. 

Recently, Huang, Li and Zhang \cite{Huang-Li-Zhang-arxiv} studied the Dirichlet problem of 3D inhomogeneous Navier-Stokes equations with density-dependent viscosity, and proved that the system admits a unique global strong solution as long as the initial density are sufficiently large. This is the first result concerning the existence of large strong solution for the inhomogeneous Navier-Stokes equations in three dimensions. Motivated by \cite{Huang-Li-Zhang-arxiv}, we consider the global well-posedness problem of inhomogeneous incompressible micropolar equations with density-dependent viscosity and large initial data.

Before stating the main results, we explain the notation and conventions used throughout this paper. For given function $f(x)$ or $f(x,t)$, we denote
\begin{equation*}
\int f \dif x=\int_{\Omega} f\dif x,
\end{equation*}
For a positive integer $k$ and $p\geq1$, we denote the standard Lesbegue and Sobolev spaces as follows:
\begin{equation*}
\begin{gathered}
\|f\|_{L^p}=\|f\|_{L^p(\Omega)},\ \|f\|_{W^{k,p}}=\|f\|_{W^{k,p}(\Omega)},\ \|f\|_{H^k}=\|f\|_{W^{k,2}(\Omega)},\\
C^\infty_{0,\sigma}=\{f\in C^\infty_0(\Omega):\mathrm{div}f=0\}, \ H_0^1=\overline{C^\infty_0}, \\
H_{0,\sigma}^1=\overline{C^\infty_{0,\sigma}},\ \mathrm{closure}\ \mathrm{in}\ \mathrm{the}\ \mathrm{norm}\ \mathrm{of}\  H^1.    
\end{gathered}
\end{equation*}

Our main result can be stated as follows.
\begin{theorem}\label{global} 
Let $\Omega$ be a bounded smooth domain in $\mathbb{R}^3$. Assume that
\begin{equation}\label{vis-c}
\alpha>1, \quad 0<\beta\leq\frac{\alpha+1}{2}.
\end{equation}
Given constants $\bar{\rho}>1$ and $C_0>1$, suppose that the initial data $(\rho_0,u_0, w_0)$ satisfies 
\begin{equation}\label{ia}
\bar \rho \le \rho_0\le C_0 \bar \rho,\quad \rho_0 \in W^{1,q},\ 3<q<6, \quad  u_0 \in H_{0,\sigma}^1 \cap H^2, \quad  w_0 \in H_0^1 \cap H^2.
\end{equation}
Then there exists a positive constant $M$ depending only on $C_0,\bar\mu, \bar\xi, \bar\eta, \bar\lambda, \alpha$, $\|\nabla\rho_0\|_{L^q}, \| u_0\|_{H^2}, \| w_0\|_{H^2}$ and $\Omega$ such that if
\begin{equation}\label{ini}
\bar\rho \ge M(\Omega,C_0,\bar\mu,\bar\xi,\bar\eta,\bar\lambda,\alpha,\|\nabla\rho_0\|_{L^q}, \| u_0\|_{H^2}, \| w_0\|_{H^2}),
\end{equation}
then the problem \eqref{ins}--\eqref{bc} admits a unique global strong solution $(\rho,u)$ in $\Omega\times(0,\infty)$ satisfying 
\begin{equation}
\left\{ \begin{array}{l}
\rho\in C([0,\infty);W^{1,q}),\quad \rho_t \in C([0,\infty);L^q), \\
\nabla u, P\in C([0,\infty);H^1)\cap L^2((0,\infty);W^{1,q}),\quad \nabla w\in C([0,\infty);H^1), \\
(\sqrt{\rho}u_t, \sqrt{\rho}w_t)\in L^\infty(0,\infty;L^2),\quad (u_t, w_t)\in L^2(0,\infty;H^1_0).
\end{array} \right.
\end{equation}

Moreover, there exist a positive constant $\kappa(\bar{\rho})$ depending on $\bar{\rho}$, $C_0$, $\bar{\mu}$, $\bar{\xi}$, $\bar{\eta}$, $\bar{\lambda}$, $\alpha$, $\|\nabla\rho_0\|_{L^q}$, $\|u_0\|_{H^2}$, $\| w_0\|_{H^2}$ and $\Omega$ such that
\begin{equation}
\|u\|_{H^1}^2+\|w\|_{H^1}^2\leq Ce^{\kappa(\bar{\rho})t}.
\end{equation}
\end{theorem}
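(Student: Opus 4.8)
The plan is to follow the now-standard "large density" strategy from \cite{Huang-Li-Zhang-arxiv}, adapting it to handle the additional micro-rotation field $w$ and the coupling terms $2\xi(\rho)\nabla\times w$, $2\xi(\rho)\nabla\times u$, $4\xi(\rho)w$. The heuristic is that when $\bar\rho$ is very large, the momentum and angular-momentum equations become, after dividing by $\rho$, singular-perturbation problems whose leading behaviour is governed by Stokes-type operators with coefficients of size $\bar\rho^{\alpha-1}\to\infty$; since $\alpha>1$ and $0<\beta\le(\alpha+1)/2$, the viscous dissipation dominates both the convective terms and the zeroth-order rotation coupling, which scales like $\bar\rho^{\beta-1}$ and is therefore subordinate. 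First I would set up the local existence and uniqueness of a strong solution on a maximal time interval $[0,T^*)$ by a routine fixed-point/linearization argument (this is where $\rho_0\in W^{1,q}$, $u_0\in H^1_{0,\sigma}\cap H^2$, $w_0\in H^1_0\cap H^2$ are used), together with the elementary fact that $\bar\rho\le\rho_0\le C_0\bar\rho$ propagates via the transport equation to $\bar\rho\le\rho(\cdot,t)\le C_0\bar\rho$ for all $t<T^*$, since $\div u=0$.

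The core is a set of a priori estimates, uniform in $T^*$, carried out on $[0,T]$ for arbitrary $T<T^*$, and designed so that a continuation argument closes the bootstrap once $\bar\rho\ge M$. The ordered steps I would carry out are: (i) the basic energy identity — multiply the $u$-equation by $u$ and the $w$-equation by $w$, integrate, use $\div u=0$ and integration by parts, and use Korn's inequality; the troublesome cross terms $2\xi(\rho)\nabla\times w\cdot u$ and $2\xi(\rho)\nabla\times u\cdot w$ combine (after integrating by parts) with $4\int\xi(\rho)|w|^2$ into a manifestly non-negative quadratic form, so one gets $\frac{d}{dt}\int\rho(|u|^2+|w|^2)\,dx + c\bar\rho^\alpha(\|\nabla u\|_{L^2}^2+\|\nabla w\|_{L^2}^2)\le 0$, hence exponential decay of the $L^2$-energy with rate proportional to $\bar\rho^{\alpha-1}$. (ii) First-order estimates: test the $u$-equation with $u_t$ and the $w$-equation with $w_t$; the bad terms are the convective $\rho u\cdot\nabla u\cdot u_t$ and the coupling terms, all of which are absorbed using the largeness of the viscous coefficient $\bar\rho^\alpha$ relative to $\bar\rho$ (from $\rho\le C_0\bar\rho$) and relative to $\bar\rho^\beta$ (from $\beta\le(\alpha+1)/2$, via Young with the right powers), giving control of $\|\sqrt\rho u_t\|_{L^2}$, $\|\sqrt\rho w_t\|_{L^2}$ and $\int_0^T(\|\nabla u_t\|_{L^2}^2+\|\nabla w_t\|_{L^2}^2)$. (iii) Elliptic (Stokes) regularity for the stationary problem $-2\div((\mu+\xi)\calD(u))+\nabla P=\rho\times(\text{lower order})$ and its analogue for $w$, which upgrades the above into $\|\nabla u\|_{H^1}$, $\|\nabla w\|_{H^1}$, $\|P\|_{H^1}$ bounds — here the density-dependence of coefficients forces the $\|\nabla\rho\|_{L^q}$ terms to appear, and one needs $3<q<6$ so that $W^{1,q}\hookrightarrow C^{0,\gamma}$ and the product estimates close. (iv) Propagation of $\|\nabla\rho\|_{L^q}$: differentiate the transport equation, so $\frac{d}{dt}\|\nabla\rho\|_{L^q}\le C\|\nabla u\|_{L^\infty}\|\nabla\rho\|_{L^q}$, and control $\int_0^T\|\nabla u\|_{L^\infty}\,dt$ via the $L^2(W^{1,q})$ estimate on $\nabla u$ together with a logarithmic Gronwall / Beale–Kato–Majda-type inequality.

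Each of these estimates is arranged so that every "bad" constant carries a strictly negative power of $\bar\rho$ (after using $\bar\rho^{\alpha}\gg\bar\rho,\bar\rho^{\beta}$, which is exactly $\alpha>1$ and $2\beta\le\alpha+1$), so there is $M=M(\Omega,C_0,\bar\mu,\bar\xi,\bar\eta,\bar\lambda,\alpha,\|\nabla\rho_0\|_{L^q},\|u_0\|_{H^2},\|w_0\|_{H^2})$ such that for $\bar\rho\ge M$ the a priori bounds are strictly better than the assumed data bounds, closing the bootstrap and giving $T^*=\infty$. Uniqueness follows from a standard energy estimate on the difference of two solutions in $L^2\times L^2$ for $(u,w)$ and $L^q$ for $\rho$. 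Finally, the exponential decay $\|u\|_{H^1}^2+\|w\|_{H^1}^2\le Ce^{\kappa(\bar\rho)t}$ is obtained by combining the exponential decay of the basic energy from step (i) with the (non-uniform in time but finite) higher-order bounds from steps (ii)–(iii); I would actually expect decay rather than growth, so the statement's $e^{\kappa(\bar\rho)t}$ with $\kappa>0$ is presumably a conservative bound, and I would prove it by a weighted (in $t$) energy argument, multiplying the first-order estimate by $e^{\sigma t}$ with $\sigma$ a small multiple of $\bar\rho^{\alpha-1}$ and absorbing the resulting zeroth-order terms using the decay from (i). The main obstacle I anticipate is step (ii)–(iii): making the coupling terms $\xi(\rho)\nabla\times w$ and $4\xi(\rho)w$ genuinely subordinate requires the precise interplay $0<\beta\le(\alpha+1)/2$, and one has to be careful that the $w$-equation's compressible-type term $\nabla(\lambda(\rho)\div w)$ (note $\div w\neq 0$ in general) does not spoil the elliptic estimates — this is handled by the hypothesis $2\bar\eta+3\bar\lambda\ge0$, which makes the second-order operator for $w$ elliptic, but it demands a careful Lamé-system regularity argument with $W^{1,q}$ coefficients.
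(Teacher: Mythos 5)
Your overall architecture matches the paper's: local existence via Cho--Kim, a bootstrap on the three functionals $\mathcal{E}_\rho,\mathcal{E}_u,\mathcal{E}_w$ (Proposition \ref{pr}), basic energy, first-order $u_t,w_t$ tests, Stokes-type regularity (Lemma \ref{stokes-e}), and closing $\mathcal{E}_\rho$ by showing $\int_0^T\|\nabla u\|_{L^\infty}\dif t$ is small. However, there are two issues, one minor and one substantive.

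The minor issue is your claim in step (i) that the coupling terms $2\xi(\rho)\nabla\times w\cdot u$, $2\xi(\rho)\nabla\times u\cdot w$, and $-4\xi(\rho)|w|^2$ ``combine into a manifestly non-negative quadratic form.'' That cancellation is the standard trick for \emph{constant} $\xi$, but here $\xi(\rho)=\bar\xi\rho^\beta$ is variable: when you integrate by parts in $\int\xi(\rho)(\nabla\times w)\cdot u\,\dif x$ you pick up an extra commutator $\int w\cdot(\nabla(\rho^\beta)\times u)\,\dif x$, which does not cancel. The paper's \eqref{b.e.1} instead bounds the full right-hand side by $C\bar\rho^\beta(\|\nabla u\|_{L^2}^2+\|\nabla w\|_{L^2}^2)$ and absorbs it into the dissipation $\sim\bar\rho^\alpha$ using $\beta<\alpha$ and $\bar\rho\ge M_1$. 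Your downstream conclusion (exponential decay of the $L^2$-energy at rate $\sim\bar\rho^{\alpha-1}$) is still the right one, so this is a flaw in the justification, not in the plan.

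The substantive gap is step (iv). A logarithmic-Gronwall / BKM-type inequality would only give a (super-)exponential-in-$T$ bound on $\|\nabla\rho\|_{L^q}$; it cannot yield the uniform-in-$T$ bound $\mathcal{E}_\rho(T)\le 2\mathcal{E}_\rho(0)$ needed to close the bootstrap. What is actually required is the quantitative smallness $\int_0^T\|\nabla u\|_{L^\infty}\dif t\le C\bar\rho^{-B}$, and the only mechanism for that in this setting is the one from \cite{Huang-Li-Zhang-arxiv}: time-weighted estimates with weights $t$ and $t^2$ (splitting the time line at $t=\min\{1,T\}$ and using integrability of $t^{-2q/(q+6)}$ near $0$ and $t^{-4q/(q+6)}$ at infinity), fed into the $W^{2,q}$ Stokes bound. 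You never describe how those time-weighted bounds would be obtained, and in fact they cannot be obtained here by the direct \cite{Huang-Li-Zhang-arxiv} argument: the extra \emph{linear} terms $\bar\rho^{\beta-\alpha}\|\nabla\times w\|$ in \eqref{H2} and $\bar\rho^{\beta-\alpha}\|\nabla u\|$ in \eqref{H2w} (absent in the Navier--Stokes case) produce terms proportional to $\|\nabla u\|_{L^2}^2+\|\nabla w\|_{L^2}^2$ without any $t$-weight in \eqref{t1.1}, and these are not integrable on $[0,\infty)$ with a $t^k$ weight from \eqref{basic-est} alone. The paper's new ingredient, which you miss entirely, is the exponentially weighted basic energy \eqref{basic-est.2}: since $t^k\le C(\kappa)e^{\kappa t}$ with $\kappa=\bar\kappa\bar\rho^{\alpha-1}$, the extra linear terms become integrable with the right $\bar\rho$-powers, and \eqref{tdu}--\eqref{t2du} and \eqref{trut}--\eqref{ttrut} go through. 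This is precisely the point where the micropolar problem departs from \cite{Huang-Li-Zhang-arxiv}, and without it your step (iv) does not close.

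One small observation in your favor: you correctly flag that the displayed decay estimate in the theorem should carry a negative exponent $e^{-\kappa(\bar\rho)t}$; as written it is a typo inconsistent with the abstract, and the proof (multiplying \eqref{t1.1} by $e^{\kappa t}$ as in \eqref{t1.3}) indeed yields decay.
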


\begin{remark}
As was pointed out by \cite{Huang-Li-Zhang-arxiv}, Theorem \ref{global} implies that the flow is globally stable when the initial Reynolds number is sufficiently small. 
\end{remark}
\begin{remark}
Theorem \ref{global} can also be applied to the periodic case. Moreover, after slight modifications, we can also treat the case $\alpha>1$ and $\beta=0$.
\end{remark}

Let's briefly sketch the proof. To extend the local solution to a global one, we need to establish uniform a priori estimates on solutions in suitable higher-order norms, where some basic ideas are borrowed from \cite{Huang-Li-Zhang-arxiv}. However, due to the strong coupling between the velocity $u$ and the micro-rotational velocity $w$, the proof of Theorem \ref{global} is much more complicated. Due to the linear terms $\xi(\rho)\D\times w$ and $\xi(\rho)\D\times u$ on the right hand side of \eqref{ins}$_2$ and \eqref{ins}$_3$, there are extra linear terms in the high-order estimates of $u$ and $w$ (see Lemma \ref{lem:D2u}), which makes it impossible for us to follow the idea of \cite{Huang-Li-Zhang-arxiv} to prove the time-weighted estimates \eqref{tdu}--\eqref{t2du}. Inspired by \cite{MR4514901}, we find that $e^{\kappa t}(\|\D u\|_{L^2}^2+\|\D w\|_{L^2}^2)$ is integrable on $[0,T]$ for some $\kappa>0$ depending on $\bar{\rho}$, with an order of $\bar{\rho}^{1-\alpha}$ (see \eqref{basic-est.2}). With the help of \eqref{basic-est.2}, 
 we are able to prove the desired time-weighted estimates, which is essential in establishing the uniform bound of $\|\nabla u\|_{L^1_tL^\infty_x}$.

The rest of the paper is organized as follows. In Section \ref{prelim}, we collect some elementary facts and inequalities which will be needed in later analysis. Section \ref{apri} is devoted to the proof of Theorem \ref{global}.

\section{Preliminaries}\label{prelim}
First, the following local existence theory, where the initial density is strictly away from vacuum, can be shown by similar arguments as in Cho and Kim \cite{MR2094425}:
\begin{lemma}\label{local}
Assume that the initial data $(\rho_0, u_0, w_0)$ satisfies the regularity condition \eqref{ia}. Then there exists a small time $T_0$ and a unique strong solution $(\rho, u, P)$ to the initial boundary value problem \eqref{ins}--\eqref{bc} such that
\begin{equation}\label{l-r}
\left\{ \begin{array}{l}
\rho\in C([0,T_0];W^{1,q}),\quad \rho_t \in C([0,T_0];L^q), \\
\nabla u, P\in C([0,T_0];H^1)\cap L^2(0,T_0;W^{1,q}),\quad \nabla w\in C([0,T_0];H^1), \\
(\sqrt{\rho}u_t, \sqrt{\rho}w_t)\in L^\infty(0,T_0;L^2),\quad (u_t, w_t)\in L^2(0,T_0;H^1_0).
\end{array} \right.
\end{equation}
Furthermore, if $T^\ast$ is the maximal existence time of the local strong solution $(\rho, u)$, then either
$T^\ast=\infty$ or
\begin{equation}\label{blow-up}
    \sup_{0\leq t\leq T^\ast}\left(\|\nabla\rho\|_{L^q}+\|\nabla u\|_{L^2}+\|\nabla w\|_{L^2}\right)=\infty.
\end{equation}
\end{lemma}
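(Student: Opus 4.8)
The plan is to adapt the linearization-and-iteration scheme of Cho and Kim \cite{MR2094425} to the coupled system. Since $\rho_0$ is bounded away from vacuum no mollification is required, and no compatibility condition beyond \eqref{ia} is needed: with $P_0$ determined by the elliptic problem $\div\!\big(2(\mu(\rho_0)+\xi(\rho_0))\calD(u_0)\big)-\D P_0+2\xi(\rho_0)\D\times w_0=\rho_0 g$, one automatically has $g=\pt u|_{t=0}\in L^2$, and similarly for $\pt w|_{t=0}$. Starting from $(\rho^0,u^0,w^0)=(\rho_0,u_0,w_0)$ and given $(\rho^k,u^k,w^k)$, define $\rho^{k+1}$ by the linear transport equation $\pt\rho^{k+1}+u^k\cdot\D\rho^{k+1}=0$, $\rho^{k+1}(0)=\rho_0$ (which coincides with $\pt\rho^{k+1}+\div(\rho^{k+1}u^k)=0$ because $\div u^k=0$), and then $(u^{k+1},P^{k+1})$ and $w^{k+1}$ as the solutions of the two \emph{decoupled} linear problems obtained from \eqref{ins}$_2$ and \eqref{ins}$_3$ by freezing the density at $\rho^{k+1}$, the convecting velocity at $u^k$, and the rotational forcing at $w^k$ (resp. $u^k$), with the boundary and divergence-free conditions \eqref{bc}. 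As $\rho^{k+1}$ is the transport of $\rho_0$ by a divergence-free field, $\bar\rho\le\rho^{k+1}\le C_0\bar\rho$ for every $k$, so all coefficients $\mu(\rho^{k+1}),\xi(\rho^{k+1}),\eta(\rho^{k+1}),\lambda(\rho^{k+1})$ stay in a fixed compact subset of $(0,\infty)$; the $u$-problem is then a genuine Stokes-type problem and the $w$-problem a Lam\'e-type elliptic--parabolic problem, both uniformly coercive thanks to $\bar\mu,\bar\xi,\bar\eta>0$ and $2\bar\eta+3\bar\lambda\ge0$. Standard linear theory produces each $(\rho^{k+1},u^{k+1},P^{k+1},w^{k+1})$ with the regularity of \eqref{l-r} on some interval.

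Next I would prove that the iterates are bounded, uniformly in $k$, on a short interval $[0,T_0]$ in the norm
\begin{equation*}
\Phi_k(T):=\sup_{[0,T]}\Big(\|\D\rho^k\|_{L^q}+\|\D u^k\|_{H^1}+\|\D w^k\|_{H^1}+\|\sqrt{\rho^k}\,\pt u^k\|_{L^2}+\|\sqrt{\rho^k}\,\pt w^k\|_{L^2}\Big)+\int_0^T\!\Big(\|\pt u^k\|_{H^1_0}^2+\|\pt w^k\|_{H^1_0}^2\Big)dt.
\end{equation*}
The density gradient is controlled via the transport equation by $\|\D\rho^{k+1}(t)\|_{L^q}\le\|\D\rho_0\|_{L^q}\exp\!\big(C\!\int_0^t\|\D u^k\|_{L^\infty}ds\big)$ together with $W^{1,q}\hookrightarrow L^\infty$ ($q>3$). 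The basic energy estimate (testing the $u^{k+1}$- and $w^{k+1}$-equations by $u^{k+1}$ and $w^{k+1}$, adding, and absorbing the two coupling integrals $\int\xi(\rho^{k+1})(\D\times w^k)\cdot u^{k+1}$ and $\int\xi(\rho^{k+1})(\D\times u^k)\cdot w^{k+1}$ by Young's inequality) bounds $\sqrt{\rho^{k+1}}u^{k+1},\sqrt{\rho^{k+1}}w^{k+1}$ in $L^\infty_t L^2$ and $\D u^{k+1},\D w^{k+1}$ in $L^2_t L^2$. Testing by $\pt u^{k+1}$ and $\pt w^{k+1}$, handling $\pt\mu(\rho^{k+1})=-u^k\cdot\D\mu(\rho^{k+1})$ (and likewise for $\xi,\eta,\lambda$) with the $\D\rho^{k+1}$-bound, and invoking the $H^2$ Stokes/Lam\'e estimates for the stationary parts, yields a closed Gronwall-type inequality for $\Phi_{k+1}$ with $\Phi_{k+1}(0)$ controlled by the data; choosing $T_0$ small enough gives $\Phi_{k+1}(T_0)\le C_1$ whenever $\Phi_k(T_0)\le C_1$, hence $\sup_k\Phi_k(T_0)\le C_1$. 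The same elliptic estimates in $L^q$ yield a uniform bound for $\D u^{k+1},\D P^{k+1}$ in $L^2(0,T_0;W^{1,q})$, which is what makes $\int_0^{T_0}\|\D u^k\|_{L^\infty}dt$ finite and closes the previous step.

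Then I would show the sequence is Cauchy in a weaker topology. The differences $\rho^{k+1}-\rho^k$, $u^{k+1}-u^k$, $w^{k+1}-w^k$ solve transport/parabolic equations whose right-hand sides are products of uniformly bounded quantities with the differences at level $k$; coefficient differences are treated by $|\mu(\rho^{k+1})-\mu(\rho^k)|\le C\|\rho^{k+1}-\rho^k\|_{L^\infty}$ and likewise for $\xi,\eta,\lambda$ (the coefficients are $C^1$ and all densities lie in a fixed compact interval). Estimating $\|\rho^{k+1}-\rho^k\|_{L^2}+\|u^{k+1}-u^k\|_{L^2}+\|w^{k+1}-w^k\|_{L^2}$ together with $\int_0^t\big(\|\D(u^{k+1}-u^k)\|_{L^2}^2+\|\D(w^{k+1}-w^k)\|_{L^2}^2\big)$ gives, after shrinking $T_0$ if necessary, a contraction, so the iterates converge in $C([0,T_0];L^2)\times\big(C([0,T_0];L^2)\cap L^2(0,T_0;H^1_0)\big)^2$ and, by interpolation with the uniform high-order bounds, in every intermediate topology. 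The limit $(\rho,u,w,P)$ is a strong solution; the regularity \eqref{l-r} follows from weak-$\ast$ lower semicontinuity of the uniform bounds, and the stated time-continuities follow from the equations (the transport equation for $\D\rho\in C([0,T_0];L^q)$, and the parabolic equations together with $\sqrt{\rho}\,\pt u,\sqrt{\rho}\,\pt w\in L^\infty_tL^2$ plus a Lions--Magenes argument for $\D u,\D w\in C([0,T_0];H^1)$). Uniqueness on the common existence interval is obtained by applying the very same difference estimate to two solutions with identical data and using Gronwall.

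Finally, for the blow-up alternative, suppose $T^\ast<\infty$ and, contrary to \eqref{blow-up}, that $M_0:=\sup_{0\le t<T^\ast}\big(\|\D\rho\|_{L^q}+\|\D u\|_{L^2}+\|\D w\|_{L^2}\big)<\infty$. Running the a priori estimates of the previous paragraphs on the actual solution over $[0,T^\ast)$ — basic energy, the $\pt u,\pt w$ estimate closed by Stokes/Lam\'e regularity, and the transport bound for $\D\rho$, which is finite because $\int_0^{T^\ast}\|\D u\|_{L^\infty}dt\lesssim\int_0^{T^\ast}\|\D u\|_{W^{1,q}}dt<\infty$ (this $L^2_tW^{1,q}$ control following from $M_0$ and the $\pt u$-bound via the $W^{1,q}$ Stokes estimate) — one bounds the full norm in \eqref{l-r} uniformly on $[0,T^\ast)$ by a constant depending only on $M_0$, $T^\ast$ and the data. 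Hence $(\rho,u,w)(\cdot,t)$ has a limit as $t\uparrow T^\ast$ in the spaces required by the local construction, and restarting at time $T^\ast$ extends the solution past $T^\ast$, contradicting maximality; so either $T^\ast=\infty$ or \eqref{blow-up} holds. The only deviation from the Navier--Stokes treatment of \cite{MR2094425} is the bookkeeping forced by the micropolar structure: one must lag exactly one velocity to decouple the $u$- and $w$-problems, carry the two linear rotational terms $2\xi(\rho)\D\times w$ and $2\xi(\rho)\D\times u$ through every estimate simultaneously (they are always absorbable by Young's inequality using $\bar\xi>0$ and the uniform bounds on $\rho$), and use the Lam\'e-type elliptic estimate for $-\div(2\eta(\rho)\calD(w))-\D(\lambda(\rho)\div w)$ in place of the Stokes estimate for $w$. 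I expect the most delicate point to be the closing of the second-order estimate, where the $x$-dependence of all four coefficients and the coupling terms interact; everything else is a routine adaptation, which is why the details are omitted.
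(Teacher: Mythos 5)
Your proposal is a detailed execution of precisely the argument the paper invokes: it simply cites Cho and Kim \cite{MR2094425} and says the local theory ``can be shown by similar arguments,'' without giving any further detail. Your linearization--iteration scheme, uniform $H^2$/$W^{1,q}$ bounds, contraction in a weaker topology, and continuation argument for the blow-up alternative are exactly the adaptation the authors have in mind, with the expected micropolar bookkeeping (lagging one field to decouple, Lam\'e estimates for the $w$-block), so the approach matches the paper's.
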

In this paper, we will employ Bovosgii's theory which can be found in \cite{MR3289443}.
\begin{lemma}\label{bovosgii}
Let $\Omega$ be a bounded domain with Lipschitz boundary, $1 < p < \infty$, and $b(x) \in L^p(\Omega)$ with $\int_\Omega b(x) \dif x=0$. Then there exists $v(x) \in W_0^{1,p}(\Omega)$ satisfying:
\begin{equation*}
\mathrm{div} v=b, \qquad\text{in} ~ \Omega,
\end{equation*}
and
\begin{equation}
\|\nabla v\|_{L^p(\Omega)}\leq C(p)\|b\|_{L^p(\Omega)}.
\end{equation}
\end{lemma}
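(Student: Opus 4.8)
The plan is to reduce to the model situation in which $\Omega$ is star-shaped with respect to an open ball, to solve that case by Bogovskii's explicit integral operator, and then to treat a general bounded Lipschitz domain by covering it with finitely many star-shaped pieces and patching the local solutions together.

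\emph{Step 1 (star-shaped case).} Suppose first that $\Omega$ is star-shaped with respect to a ball $B\subset\Omega$. Fixing $\omega\in C_0^\infty(B)$ with $\int_\Omega\omega\,\dif x=1$, for $b\in L^p(\Omega)$ with $\int_\Omega b\,\dif x=0$ I would set
\begin{equation*}
v(x)=\int_\Omega b(y)\,\frac{x-y}{|x-y|^{3}}\left(\int_{|x-y|}^{\infty}\omega\Bigl(y+r\,\tfrac{x-y}{|x-y|}\Bigr)r^{2}\,\dif r\right)\dif y .
\end{equation*}
A direct computation, using the substitution $z=y+r\frac{x-y}{|x-y|}$ together with $\int_\Omega b\,\dif x=0$, shows that $\mathrm{div}\,v=b$ in $\Omega$; star-shapedness forces the inner integral to vanish when $x\in\partial\Omega$, which will give $v\in W_0^{1,p}(\Omega)$ once the regularity below is established. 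The crux is the bound $\|\nabla v\|_{L^p(\Omega)}\le C(p,\Omega)\|b\|_{L^p(\Omega)}$: differentiating under the integral sign exhibits each $\partial_j v^i$ as the sum of (a) a weakly singular integral whose kernel is of size $O(|x-y|^{-2})$, hence bounded on every $L^p(\Omega)$ by Young's inequality since $\Omega$ is bounded, and (b) a principal-value singular integral whose kernel is homogeneous of degree $-3$ in $x-y$ and has vanishing average over spheres. The Calder\'on--Zygmund theorem then yields the $L^p$ bound for all $1<p<\infty$, and $\nabla v\in L^p(\Omega)$ together with the vanishing trace gives $v\in W_0^{1,p}(\Omega)$.

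\emph{Step 2 (general Lipschitz domain).} For a bounded Lipschitz domain $\Omega$ I would cover $\overline\Omega$ by finitely many open sets $U_1,\dots,U_N$ such that each $\Omega_k:=\Omega\cap U_k$ is star-shaped with respect to a ball, $\Omega=\bigcup_k\Omega_k$, and (using connectedness of $\Omega$) consecutive pieces overlap. Take a partition of unity $\varphi_1,\dots,\varphi_N$ subordinate to $\{U_k\}$ with $\sum_k\varphi_k\equiv1$ on $\overline\Omega$, and set $b_k=\varphi_k b$. The numbers $c_k:=\int_\Omega b_k\,\dif x$ need not vanish, but $\sum_k c_k=\int_\Omega b\,\dif x=0$, so one can choose (easily, as fixed bumps scaled by constants) functions $g_k\in L^p(\Omega)$ supported in $\Omega_k\cap\Omega_{k+1}$ with prescribed integrals, defined recursively, so that $\tilde b_k:=b_k-g_k+g_{k-1}$ (with $g_0=g_N=0$) satisfies $\supp\tilde b_k\subset\Omega_k$, $\int_\Omega\tilde b_k\,\dif x=0$, $\sum_k\tilde b_k=b$, and $\|\tilde b_k\|_{L^p}\le C\|b\|_{L^p}$. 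Applying Step 1 on $\Omega_k$ to $\tilde b_k$ yields $v_k\in W_0^{1,p}(\Omega_k)$, extended by zero to $W_0^{1,p}(\Omega)$, with $\mathrm{div}\,v_k=\tilde b_k$ and $\|\nabla v_k\|_{L^p}\le C\|b\|_{L^p}$; then $v:=\sum_k v_k$ solves $\mathrm{div}\,v=b$ with the asserted estimate, the constant depending only on $p$ and the geometry of $\Omega$.

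\emph{Main obstacle.} The technical heart is the $L^p$ estimate in Step 1: one must verify that the principal-value part of $\nabla v$ genuinely meets the size, smoothness, and cancellation hypotheses of Calder\'on--Zygmund theory, and control the behavior near $\partial\Omega$ so that $v$ has zero trace and lies in $W^{1,p}$ up to the boundary. The geometric input used in Step 2 — that a bounded Lipschitz domain is a finite union of domains star-shaped with respect to balls — is classical; granting it, the mass-redistribution bookkeeping and the summation are routine. A fully detailed treatment can be found in \cite{MR3289443}.
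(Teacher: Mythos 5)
Your proposal is correct: it is the classical Bogovskii construction (the explicit integral operator on a domain star-shaped with respect to a ball, Calder\'on--Zygmund estimates for the singular part of $\nabla v$, then a partition-of-unity and mass-redistribution argument to patch a bounded Lipschitz domain from finitely many star-shaped pieces), which is precisely the argument in \cite{MR3289443} that the paper invokes --- the paper itself gives no proof of this lemma, only the citation. One cosmetic remark: when differentiating the Bogovskii formula, in addition to the weakly singular and principal-value contributions there is also a local term of the form $b(x)$ multiplied by a bounded function of $x$; it is harmless for the $L^p$ estimate but should be accounted for in a complete write-up.
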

Also, the well-known Gagliardo-Nirenberg inequality \cite{MR241822} will be frequently used in this paper.
\begin{lemma}\label{G-N}
Assume that $\Omega$ is a bounded Lipschitz domain in $\mathbb{R}^3$. Let 1 $\leq q  \leq\infty$ be a positive extended real quantity. Let j and m  be non-negative integers such that $ j < m$. Furthermore, let $1 \leq r \leq \infty$ be a positive extended real quantity,  $p \geq 1$  be real and  $\theta \in [0,1]$  such that the relations
\begin{equation}
    \dfrac{1}{p} = \dfrac{j}{n} + \theta \left( \dfrac{1}{r} - \dfrac{m}{n} \right) + \dfrac{1-\theta}{q}, \qquad \dfrac jm \leq \theta \leq 1
\end{equation}
hold. Then, 
\begin{equation}
    \|\nabla^j u\|_{L^p(\Omega)} \leq C\|\nabla^m u\|_{L^r(\Omega)}^\theta\|u\|_{L^q(\Omega)}^{1-\theta} + C_1\|u\|_{L^q(\Omega)}
\end{equation}
where $u \in L^q(\Omega)$  such that  $\nabla^m u \in L^r(\Omega)$. Moreover, if $q>1$ and $r>3$,
\begin{equation}
\|u\|_{C(\bar{\Omega})}\leq C\|u\|^{q(r-3)/(3r+q(r-3))}_{L^q}\|\nabla u\|^{3r/(3r+q(r-3))}_{L^r}+C_2\|u\|_{L^q}.
\end{equation}
where $u \in L^q(\Omega)$  such that  $\nabla u \in L^r(\Omega)$.
In any case, the constant $C > 0$  depends on the parameters $j,\,m,\,n,\,q,\,r,\,\theta$, on the domain $\Omega$, but not on $u$. In addition, if $u\cdot n|_{\partial\Omega}=0$ or $\int_\Omega u\dif x=0$, we can choose $C_1=C_2=0$.
\end{lemma}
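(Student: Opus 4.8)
The plan is to deduce this bounded-domain statement from the classical homogeneous Gagliardo--Nirenberg inequality on $\mathbb{R}^3$ by means of a Sobolev extension operator, treating the additive terms $C_1\|u\|_{L^q}$, $C_2\|u\|_{L^q}$ and the conditions under which they vanish as a separate, softer matter. First I would establish on the whole space that, under the stated exponent relation, $\|\nabla^j v\|_{L^p(\mathbb{R}^3)}\le C\|\nabla^m v\|_{L^r(\mathbb{R}^3)}^{\theta}\|v\|_{L^q(\mathbb{R}^3)}^{1-\theta}$ for Schwartz $v$ (then extend by density). The backbone of this is the elementary first-order estimate $\|v\|_{L^{p^\ast}}\le C\|\nabla v\|_{L^r}^{a}\|v\|_{L^q}^{1-a}$ obtained by the usual slicing and H\"older argument, iterated in the number of derivatives and combined with the $L^p$-interpolation inequality; a dilation argument $v(x)\mapsto v(\lambda x)$ then forces the precise value of $\theta$ required by scaling invariance and simultaneously explains why no additive term is needed on $\mathbb{R}^3$. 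The genuinely delicate point here is the exceptional case in which $m-j-3/r$ is a nonnegative integer and $r>1$, where $\theta<1$ is forced and the estimate has to be obtained by a limiting (logarithmic Sobolev type) argument; I would state the admissible range $j/m\le\theta\le1$ precisely so as to exclude the truly forbidden endpoints.

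Next I would transfer to $\Omega$. Since $\Omega$ is a bounded Lipschitz domain, there is a Stein/Calder\'on extension operator $E$ that is simultaneously bounded $W^{m,r}(\Omega)\to W^{m,r}(\mathbb{R}^3)$ and $L^q(\Omega)\to L^q(\mathbb{R}^3)$ with $Eu|_\Omega=u$. Applying the whole-space inequality to $Eu$ and restricting gives
\[
\|\nabla^j u\|_{L^p(\Omega)}\le\|\nabla^j(Eu)\|_{L^p(\mathbb{R}^3)}\le C\|\nabla^m(Eu)\|_{L^r(\mathbb{R}^3)}^{\theta}\|Eu\|_{L^q(\mathbb{R}^3)}^{1-\theta}\le C\Big(\textstyle\sum_{k=0}^{m}\|\nabla^k u\|_{L^r(\Omega)}\Big)^{\theta}\|u\|_{L^q(\Omega)}^{1-\theta}.
\]
It then remains to absorb the intermediate norms $\|\nabla^k u\|_{L^r(\Omega)}$, $0\le k<m$. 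I would do this via the Ehrling/interpolation inequality on bounded domains, $\|\nabla^k u\|_{L^r(\Omega)}\le\varepsilon\|\nabla^m u\|_{L^r(\Omega)}+C_\varepsilon\|u\|_{L^r(\Omega)}$, and then downgrade $\|u\|_{L^r(\Omega)}$ to $\|u\|_{L^q(\Omega)}$: this is immediate on a bounded domain if $q\ge r$, and if $q<r$ one interpolates $\|u\|_{L^r}\le\|u\|_{L^q}^{1-\sigma}\|u\|_{L^{p^\ast}}^{\sigma}$ against a Sobolev exponent and uses Young's inequality to reabsorb the top-order contribution. Collecting all lower-order pieces into one term yields exactly $\|\nabla^j u\|_{L^p(\Omega)}\le C\|\nabla^m u\|_{L^r(\Omega)}^{\theta}\|u\|_{L^q(\Omega)}^{1-\theta}+C_1\|u\|_{L^q(\Omega)}$.

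The $C(\bar\Omega)$ estimate is the special case $j=0$, $m=1$, $p=\infty$, $n=3<r$: it follows from Morrey's inequality $\|v\|_{C^{0,1-3/r}(\mathbb{R}^3)}\le C\|v\|_{W^{1,r}(\mathbb{R}^3)}$ applied to $Eu$, with the same dilation argument producing the stated exponent $3r/(3r+q(r-3))$ and the same absorption of the lower-order term giving $C_2$. For the last claim, when $\int_\Omega u\,dx=0$ (or under the boundary constraint $u\cdot n|_{\partial\Omega}=0$), I would invoke the Poincar\'e--Wirtinger inequality $\|u\|_{L^q(\Omega)}\le C\|\nabla u\|_{L^q(\Omega)}$ --- together with its iterates when $m\ge2$ --- which lets one bound all intermediate and zeroth-order contributions by $\|\nabla^m u\|_{L^r}$-type quantities alone; re-running the argument on the mean-subtracted function (equivalently, a compactness argument ruling out a sequence along which the homogeneous inequality degenerates) then removes the additive term, i.e. $C_1=C_2=0$.

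The main obstacle is Step~1 in full generality: establishing the homogeneous Gagliardo--Nirenberg inequality on $\mathbb{R}^3$ uniformly over all admissible $(j,m,\theta)$, in particular handling the exceptional integer cases where plain iteration and interpolation break down. The only other point requiring care is ensuring, in Step~2, that the error introduced by the extension operator and by the Ehrling inequality is controlled by $\|u\|_{L^q}$ and not by a stronger Sobolev norm; the remaining ingredients (density, the dilation normalization of $\theta$, Morrey at the $L^\infty$ endpoint, and Poincar\'e for the mean-zero improvement) are routine.
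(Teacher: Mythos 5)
The paper offers no proof of this lemma at all: it is quoted as a classical result with a citation to Ladyzhenskaya--Solonnikov--Uraltseva, so there is nothing to compare against on the paper's side. Your route (whole-space Gagliardo--Nirenberg for Schwartz functions plus scaling, a Stein/Calder\'on extension operator for the Lipschitz domain, Ehrling-type absorption of the intermediate norms with the downgrade from $L^r$ to $L^q$ via interpolation and Young, and Morrey at the $p=\infty$, $r>3$ endpoint) is the standard textbook proof and is sound for the two displayed inequalities, provided you indeed carry out the whole-space case including the exceptional integer cases you flag.

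The genuine gap is in your final step, the claim that $\int_\Omega u\,\dif x=0$ (or $u\cdot n|_{\partial\Omega}=0$) allows $C_1=0$ for general $j<m$ via ``Poincar\'e--Wirtinger together with its iterates when $m\ge 2$.'' Those iterates do not exist: zero mean of $u$ does not imply zero mean of $\nabla u$ (one has $\int_\Omega \partial_i u\,\dif x=\int_{\partial\Omega}u\,n_i\,\dif S$), so you cannot bound $\|\nabla u\|_{L^q}$ by $\|\nabla^2 u\|_{L^q}$ and the chain collapses at the second step. No repair is possible in that generality, because the homogeneous inequality is false there: for $m\ge 2$, $\theta>0$, a mean-zero affine function $u$ has $\nabla^m u=0$ but $\nabla^j u\not\equiv 0$ (and $\|u\|_{L^p}>0$ when $j=0$), so the right-hand side vanishes while the left does not; the same example defeats the compactness argument you propose as an alternative, since the obstruction is not just constants but all polynomials of degree between $j$ and $m-1$. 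In other words, the vanishing of $C_1, C_2$ under these hypotheses is only correct in the first-order case $j=0$, $m=1$ (which is the only way the paper uses it), where Poincar\'e--Wirtinger applies directly for the mean-zero condition and, for $u\cdot n|_{\partial\Omega}=0$, a compactness argument works once you observe that the outward normals of a bounded domain span $\mathbb{R}^3$, so no nonzero constant vector field satisfies the boundary constraint. Your write-up should either restrict the $C_1=C_2=0$ claim to this case or add a hypothesis (e.g. $u$ vanishing on $\partial\Omega$, or orthogonality to all polynomials of degree $<m$) under which the higher-order homogeneous inequality actually holds.
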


High-order a priori estimates rely on the following regularity results for density-dependent Stokes equations. The proof can be found in \cite{Huang-Li-Zhang-arxiv}.
\begin{lemma}\label{stokes-e}
Assume that $\rho\in W^{1,q}, 3<q<6$, and $\bar \rho \le \rho \le C_0 \bar \rho$.
Let $(u, P) \in H_{0,\sigma}^1\times L^2$ be the unique weak solution to the boundary value problem
\begin{equation}\label{stokes}
	\left\{ \begin{array}{l}
		-\mathrm{div}(2\bar\mu\rho^\alpha \calD(u))+\nabla P=F, \\
		\mathrm{div} u=0, \\
        \int \frac{P}{\rho^\alpha} \dif x=0.
	\end{array} \right.
\end{equation}
Then we have the following regularity results:

(1) If $F\in L^2$, then $(u, P)\in H^2\times H^1$ and
\begin{equation}\label{stokes-2}
\| u\|_{H^2}+\bigg\|\frac{P}{\rho^\alpha}\bigg\|_{H^1}
\leq C(\bar\rho^{-\alpha}+\bar\rho^{-\alpha-\frac{q}{q-3}}\|\nabla \rho \|_{L^q}^\frac{q}{q-3})\|F\|_{L^2};
\end{equation}

(2) If $F\in L^q$ for some $q\in (3,6)$ then $(u, P)\in W^{2,q}\times W^{1,q}$ and
\begin{equation}\label{stokes-q}
\| u\|_{W^{2,q}}+\bigg\|\frac{P}{\rho^\alpha}\bigg\|_{W^{1,q}}
\leq  C(\bar\rho^{-\alpha}+\bar\rho^{-\alpha-\frac{5q-6}{2(q-3)}}\|\nabla \rho \|_{L^q}^\frac{5q-6}{2(q-3)})\|F\|_{L^q},
\end{equation}
where the constant C in \eqref{stokes-2} and \eqref{stokes-q} depends on $\Omega, q$.
\end{lemma}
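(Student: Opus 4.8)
The plan is to treat \eqref{stokes} as a perturbation of the classical constant-coefficient Stokes system. Set $\Pi:=P/\rho^\alpha$. Using $\div\calD(u)=\tfrac12\Delta u$ for divergence-free $u$, we have $\div(2\bar\mu\rho^\alpha\calD(u))=\bar\mu\rho^\alpha\Delta u+2\bar\mu(\D\rho^\alpha)\cdot\calD(u)$, and $\D P=\rho^\alpha\D\Pi+\Pi\,\D\rho^\alpha$; dividing \eqref{stokes}$_1$ by $\rho^\alpha$ and using $\D\rho^\alpha/\rho^\alpha=\al\D\rho/\rho$ therefore recasts the problem as
\begin{equation*}
-\bar\mu\Delta u+\D\Pi=\rho^{-\alpha}F+2\bar\mu\al\,\tfrac{\D\rho}{\rho}\cdot\calD(u)-\al\,\Pi\,\tfrac{\D\rho}{\rho}=:G,\qquad \div u=0,\qquad u|_{\pa\Omega}=0,\qquad \int\Pi\,\dif x=0 .
\end{equation*}
Because $\bar\rho\le\rho\le C_0\bar\rho$, one has $\|\rho^{-\alpha}g\|_{L^p}\le\bar\rho^{-\alpha}\|g\|_{L^p}$ and $\|\D\rho/\rho\|_{L^q}\le\bar\rho^{-1}\|\D\rho\|_{L^q}$, and these are the only places where negative powers of $\bar\rho$ enter \eqref{stokes-2}--\eqref{stokes-q}.

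\emph{Step 1 (low-order bounds).} Testing \eqref{stokes}$_1$ with $u$, using $\int\D P\cdot u=0$, Korn's inequality and $\rho^\alpha\ge\bar\rho^\alpha$ gives $\|\D u\|_{L^2}\le C\bar\rho^{-\alpha}\|F\|_{L^2}$. For the pressure, take (Lemma~\ref{bovosgii}) a field $v\in W^{1,2}_0$ with $\div v=\Pi$ and $\|\D v\|_{L^2}\le C\|\Pi\|_{L^2}$, test \eqref{stokes}$_1$ with $v$, and use $\int P\,\div v=\int\rho^\alpha|\Pi|^2\ge\bar\rho^\alpha\|\Pi\|_{L^2}^2$ together with the previous bound to obtain $\|\Pi\|_{L^2}\le C\bar\rho^{-\alpha}\|F\|_{L^2}$. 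The point of invoking Bogovskii's lemma here is that the weight in the pairing $\int P\,\div v=\int\rho^\alpha\Pi\,\div v$ is bounded below by $\bar\rho^\alpha$, so this bound costs nothing in terms of the size of $\bar\rho$.

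\emph{Step 2 ($H^2\times H^1$) and Step 3 ($W^{2,q}\times W^{1,q}$).} When $F\in L^2$, apply the classical $L^2$ Stokes regularity to the recast system: $\|u\|_{H^2}+\|\Pi\|_{H^1}\le C\|G\|_{L^2}$. Estimate $\|G\|_{L^2}$ by Hölder, pairing $\D\rho/\rho\in L^q$ (here $q>3$) against $\calD(u),\Pi\in L^{2q/(q-2)}$ (note $2<2q/(q-2)<6$), interpolate with Lemma~\ref{G-N}, e.g.\ $\|\calD(u)\|_{L^{2q/(q-2)}}\le C\|\D^2u\|_{L^2}^{3/q}\|\D u\|_{L^2}^{1-3/q}+C\|\D u\|_{L^2}$ and likewise for $\Pi$, and use Young's inequality: the top-order contributions $\varepsilon\|u\|_{H^2}$, $\varepsilon\|\Pi\|_{H^1}$ go to the left with a fixed $\varepsilon$ (no smallness of $\bar\rho$ required), and substituting the Step~1 bounds for $\|\D u\|_{L^2}$ and $\|\Pi\|_{L^2}$ turns the remainder into $C(\bar\rho^{-\alpha}+\bar\rho^{-\alpha-q/(q-3)}\|\D\rho\|_{L^q}^{q/(q-3)})\|F\|_{L^2}$, which is \eqref{stokes-2}; the exponent $q/(q-3)$ is precisely what Young returns from the interpolation exponent $3/q$. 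For $F\in L^q\subset L^2$, first run Step~2, so that $u\in H^2\hookrightarrow W^{1,6}$ and $\Pi\in H^1\hookrightarrow L^6$ and hence $G\in L^{r_0}$ with $r_0>2$; then bootstrap the classical $L^r$ Stokes estimate, gaining a fixed amount of integrability (Sobolev embedding) and re-estimating $G$ by Hölder and Gagliardo--Nirenberg at each stage, until reaching $r=q$ after finitely many steps. In the final stage $\calD(u)$ and $\Pi$ are interpolated in $L^\infty$ between $L^2$ and $W^{2,q}$, $W^{1,q}$ (interpolation exponent $3q/(5q-6)$), and Young now produces the exponent $\tfrac{5q-6}{2(q-3)}$, giving \eqref{stokes-q}.

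The substantive difficulty is not the elliptic regularity, which is classical, but keeping track of the powers of $\bar\rho$ while requiring no largeness of $\bar\rho$. This is what dictates using Bogovskii's lemma in Step~1 to bound $\|\Pi\|_{L^2}$ \emph{directly} by $\bar\rho^{-\alpha}\|F\|_{L^2}$, rather than attempting to absorb the pressure contribution $\al\,\Pi\,\D\rho/\rho$ of $G$ into the left-hand side of the Stokes estimate, which would need $\bar\rho^{-1}\|\D\rho\|_{L^q}$ small: once $\|\D u\|_{L^2}$ and $\|\Pi\|_{L^2}$ are available a priori, the velocity and pressure parts of $G$ are treated by one and the same interpolate-then-Young argument, and all remaining absorptions use only a fixed $\varepsilon$, independent of $\bar\rho$, $\rho$ and $F$.
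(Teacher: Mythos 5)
The paper does not give its own proof of this lemma -- it cites \cite{Huang-Li-Zhang-arxiv} and moves on -- so there is no argument in the paper to compare yours against line by line. That said, your approach is the natural and essentially standard one for such weighted Stokes estimates, and I would expect the cited reference to proceed in the same way: pass to $\Pi=P/\rho^\alpha$ so that the leading operator becomes the constant-coefficient Stokes operator, bound $\|\nabla u\|_{L^2}$ and $\|\Pi\|_{L^2}$ first (the Bogovskii step is exactly the right device, and you rightly observe that running the pressure through Bogovskii rather than trying to absorb $\alpha\,\Pi\,\nabla\rho/\rho$ into the Stokes estimate is what avoids any smallness requirement on $\bar\rho^{-1}\|\nabla\rho\|_{L^q}$), and then close the $H^2$ and $W^{2,q}$ bounds by Gagliardo--Nirenberg plus Young applied to the commutator terms in $G$. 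The exponent bookkeeping is right: $\theta=3/q$ and Young conjugate $q/(q-3)$ for the $L^2$ estimate, $\theta=3q/(5q-6)$ and conjugate $(5q-6)/(2(q-3))$ for the $L^q$ estimate, matching \eqref{stokes-2} and \eqref{stokes-q} exactly.

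One point worth being explicit about: the bootstrap through increasing $L^r$ spaces that you invoke in Step~3 is already needed in Step~2, not merely for the $W^{2,q}$ estimate. With only $\nabla u,\Pi\in L^2$ and $\nabla\rho\in L^q$, the source $G$ lies a priori in $L^{r_0}$ with $1/r_0=1/q+1/2\in(2/3,5/6)$, so $r_0<2$ and the classical $L^2$ Stokes regularity does not apply directly as written. One must first establish qualitatively that $(u,\Pi)\in H^2\times H^1$ (by iterating $L^r$ Stokes regularity, raising $r$ by the fixed increment $1/3-1/q$ per step, or by an approximation argument), and only then carry out the interpolate-then-Young a priori estimate at the $H^2$ level; the intermediate steps do not need to be tracked quantitatively, because the final estimate interpolates directly between the Step~1 bounds and the top-order norm. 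This is a standard formal point and does not affect the validity of the scheme, but as phrased Step~2 suggests $G\in L^2$ from the outset, which is not the case; stating the qualitative-then-quantitative split cleanly would close the gap.
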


\section{A priori estimates}\label{apri}

For any fixed time $T>0$, $(\rho,u,w,P)$ is the unique local strong solution to \eqref{ins}--\eqref{bc} on $\Omega\times (0,T]$ with initial data $(\rho_0,u_0,w_0)$ satisfying \eqref{ia}, which is guaranteed by Lemma \ref{local}. Define
\begin{gather}\label{As1}
	\mathcal{E}_\rho(T) := \sup_{t\in[0,T] }\|\nabla \rho \|_{L^q},\\
	\mathcal{E}_w(T) := \bar\rho^{\alpha}\sup_{t\in[0,T] }\|\nabla w \|_{L^2}^2
	+  \int_0^{T}  \|\sqrt{\rho} w_t \|_{L^2}^2\dif t,\\
	\mathcal{E}_u(T) := \bar\rho^{\alpha}\sup_{t\in[0,T] }\|\nabla u \|_{L^2}^2
	+  \int_0^{T}  \|\sqrt{\rho} u_t \|_{L^2}^2\dif t.
\end{gather}

We have the following key proposition.
\begin{proposition}\label{pr}
Under the conditions of Theorem \ref{global}, there exists positive constants $K$ and $M$ depending on $C_0,\bar\mu, \bar\xi, \bar\eta, \bar\lambda, \alpha, \beta$, $\|\nabla\rho_0\|_{L^q}, \| u_0\|_{H^2}, \| w_0\|_{H^2}$ and $\Omega$ such that if $(\rho,u,w,P)$ is a smooth solution to the problem (\ref{ins})--(\ref{bc}) on $\Omega\times (0,T]$ satisfying
	\begin{equation}\label{a1}
		\mathcal{E}_\rho(T) \le 3\mathcal{E}_\rho(0), \quad 
		\mathcal{E}_u(T)+\mathcal{E}_w(T) \le 3K\bar\rho^{\alpha},
	\end{equation}
	then the following estimates hold:
	\begin{equation}\label{a2}
		\mathcal{E}_\rho(T) \le 2\mathcal{E}_\rho(0), \quad 
		\mathcal{E}_u(T)+\mathcal{E}_w(T) \le 2K\bar\rho^{\alpha},
	\end{equation}
	provided 
	\begin{equation}
		\bar\rho\ge M(\Omega,C_0,\bar\mu,\bar\xi,\bar\eta,\bar\lambda,\alpha, \beta,\|\nabla\rho_0\|_{L^q}, \| u_0\|_{H^2}, \| w_0\|_{H^2}).
	\end{equation}     
\end{proposition}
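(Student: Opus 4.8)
The proposition is the closing step of a continuity argument: granting the a priori hypotheses \eqref{a1}, we must recover \eqref{a2} with strictly better constants, and this will be possible because every term that threatens the closure carries either a fixed quantity determined by the initial data or a negative power of the large parameter $\bar\rho$. Since $\mathrm{div}\,u=0$, the continuity equation is a pure transport, so $\bar\rho\le\rho(\cdot,t)\le C_0\bar\rho$ is propagated, and the transported equation for $\nabla\rho$ gives $\|\nabla\rho(t)\|_{L^q}\le\|\nabla\rho_0\|_{L^q}\exp\!\big(C\!\int_0^t\|\nabla u\|_{L^\infty}\,ds\big)$; hence the density half of \eqref{a2} follows as soon as $\int_0^T\|\nabla u\|_{L^\infty}\,dt\le\ln(3/2)$, which the last step will supply. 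I will write $\dot u:=u_t+u\cdot\nabla u$, $\dot w:=w_t+u\cdot\nabla w$ for the material derivatives and build the velocity estimates in three layers. \emph{(Layer 1, energy.)} Testing \eqref{ins}$_2$ with $u$ and \eqref{ins}$_3$ with $w$ and adding, the coupling terms $\int 2\xi(\rho)(\nabla\times w)\cdot u+\int 2\xi(\rho)(\nabla\times u)\cdot w$ are, after one integration by parts and Young's inequality, of size $\tfrac12\cdot(\text{dissipation})+C\bar\rho^{2\beta-\alpha}(\|\nabla u\|_{L^2}^2+\|\nabla w\|_{L^2}^2)$; since the intrinsic dissipation has coefficient $\gtrsim\bar\rho^\alpha$ (by Korn's inequality for $w\in H_0^1$ and the divergence-free Korn identity for $u$) and $\beta<\alpha$ by \eqref{vis-c}, they are absorbed for $\bar\rho$ large, giving $\sup_t(\|\sqrt\rho u\|_{L^2}^2+\|\sqrt\rho w\|_{L^2}^2)+\bar\rho^\alpha\!\int_0^T(\|\nabla u\|_{L^2}^2+\|\nabla w\|_{L^2}^2)\,dt\le C\bar\rho(\|u_0\|_{L^2}^2+\|w_0\|_{L^2}^2)$. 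Inserting the Poincaré bound $\|\sqrt\rho u\|_{L^2}^2\le C_0C_P\bar\rho\|\nabla u\|_{L^2}^2$ back into the differential inequality upgrades this to the time-exponentially weighted estimate $\int_0^T e^{\kappa t}(\|\nabla u\|_{L^2}^2+\|\nabla w\|_{L^2}^2)\,dt\le C\bar\rho^{1-\alpha}$ for a suitable $\kappa(\bar\rho)>0$ — the estimate announced in the introduction, and the device that tames the $u$--$w$ coupling at higher order.

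\emph{(Layer 2, first-order estimates.)} Testing \eqref{ins}$_2$ with $u_t$ and \eqref{ins}$_3$ with $w_t$: the pressure drops out ($\mathrm{div}\,u_t=0$, $u_t|_{\partial\Omega}=0$); the time derivative landing on the coefficients yields $\int(\mu'+\xi')(u\cdot\nabla\rho)|\mathcal D(u)|^2$ and its analogues, handled via $\rho_t=-u\cdot\nabla\rho$, the bootstrap bound $\|\nabla\rho\|_{L^q}\le3\mathcal E_\rho(0)$ and Lemma \ref{G-N}; the convective contribution $\int\rho(u\cdot\nabla u)\cdot u_t$ is split as $\tfrac14\|\sqrt\rho u_t\|_{L^2}^2+C\bar\rho\|u\|_{L^6}^2\|\nabla u\|_{L^3}^2$ and interpolated through the elliptic bound $\|\nabla^2u\|_{L^2}\lesssim(\bar\rho^{-\alpha}+\cdots)\|F\|_{L^2}$ of Lemma \ref{stokes-e}, applied after recasting \eqref{ins}$_2$ as $-\mathrm{div}(2\bar\mu\rho^\alpha\mathcal D(u))+\nabla P=F$ with $F=-\rho\dot u+\mathrm{div}(2\bar\xi\rho^\beta\mathcal D(u))+2\bar\xi\rho^\beta\nabla\times w$, the $\xi(\rho)$-divergence term being reabsorbed since $\bar\xi\bar\rho^\beta\ll\bar\mu\bar\rho^\alpha$, together with the analogous $H^2$-regularity estimate for the variable-coefficient Lamé-type system governing $w$. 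The genuinely new terms $\int 2\xi(\rho)(\nabla\times w)\cdot u_t$ and $\int 2\xi(\rho)(\nabla\times u)\cdot w_t$ are estimated directly: using $\|u_t\|_{L^2}\le\bar\rho^{-1/2}\|\sqrt\rho u_t\|_{L^2}$ (and likewise for $w_t$) they are $\le\tfrac14(\|\sqrt\rho u_t\|_{L^2}^2+\|\sqrt\rho w_t\|_{L^2}^2)+C\bar\rho^{2\beta-1}(\|\nabla u\|_{L^2}^2+\|\nabla w\|_{L^2}^2)$, whose time integral is $\le C\bar\rho^{2\beta-\alpha}$ by Layer 1; here the hypothesis $2\beta\le\alpha+1$ of \eqref{vis-c} guarantees $\bar\rho^{2\beta-\alpha}\le\bar\rho$, negligible against $K\bar\rho^\alpha$ since $\alpha>1$. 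Assembling, running a Gronwall argument (whose amplification factor is $e^{O(\bar\rho^{-\varepsilon})}$, hence bounded) and using $\int(\mu(\rho)+\xi(\rho))|\mathcal D(u)|^2\gtrsim\bar\rho^\alpha\|\nabla u\|_{L^2}^2$ together with its value $\lesssim\bar\rho^\alpha\|\nabla u_0\|_{L^2}^2$ at $t=0$, one obtains $\mathcal E_u(T)+\mathcal E_w(T)\le C\bar\rho^\alpha(\|\nabla u_0\|_{L^2}^2+\|\nabla w_0\|_{L^2}^2)+(\text{negative power of }\bar\rho)\cdot(\text{quantities bounded by }3K\bar\rho^\alpha)$, which is $\le 2K\bar\rho^\alpha$ once $K$ is chosen large relative to the data and $\bar\rho\ge M$.

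\emph{(Layer 3, time weights and $\|\nabla u\|_{L_t^1L_x^\infty}$.)} Rerunning Layer 2 against the weights $t$ and $t^2$, the coupling terms now enter as $\int_0^T t^{j}\bar\rho^{2\beta-1}(\|\nabla u\|_{L^2}^2+\|\nabla w\|_{L^2}^2)\,dt$, $j=1,2$; since $t^{j}\le C_je^{\kappa t}$, the time-exponentially weighted estimate of Layer 1 bounds these uniformly in $T$ by $C\bar\rho^{2\beta-\alpha}$ — this is exactly where that estimate is indispensable — so one gets the time-weighted analogues of the $\mathcal E_u,\mathcal E_w$ bounds, in particular $\sup_t t\|\sqrt\rho\dot u\|_{L^2}^2+\int_0^T t\|\nabla\dot u\|_{L^2}^2\,dt$ and its $t^2$-version, each a controlled power of $\bar\rho$. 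Then $\|\nabla u\|_{L^\infty}\le C\|u\|_{W^{2,q}}\le C(\bar\rho^{-\alpha}+\cdots)\|\rho\dot u+(\text{lower order})\|_{L^q}$ by Lemma \ref{stokes-e}(2), and $\|\rho\dot u\|_{L^q}\lesssim\bar\rho\|\dot u\|_{L^2}^{1-\theta}\|\nabla\dot u\|_{L^2}^{\theta}$; splitting $\int_0^T$ into $(0,1]$ and $(1,\infty)$, applying Hölder in time against the time-weighted bounds, and exploiting the factor $\bar\rho^{-\alpha}$ from Lemma \ref{stokes-e}, one gets $\int_0^T\|\nabla u\|_{L^\infty}\,dt\le\ln(3/2)$ for $\bar\rho\ge M$. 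This closes the density estimate of Layer 1, hence \eqref{a2}; the exponential-in-time growth bound for $\|u\|_{H^1}^2+\|w\|_{H^1}^2$ claimed in Theorem \ref{global} is then read off from Layer 1 and the elliptic estimates. The constant $M$ is fixed last, large enough in terms of $K$ (itself determined by the data) and all other constants to justify every "$\bar\rho$ large" invoked above.

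\textbf{Main obstacle.} The crux, and the point where this problem genuinely departs from the inhomogeneous Navier–Stokes analysis it is modelled on, is the pair of \emph{linear} coupling terms $\xi(\rho)\nabla\times w$ and $\xi(\rho)\nabla\times u$: being linear they are not small merely because the velocities are small, so the time-weighted estimates of Layer 3 cannot close by the naive scheme. The step I expect to be the most delicate is therefore the derivation of the time-exponentially weighted energy estimate in Layer 1, followed by the discipline of routing every later estimate through that weight so that each extra linear term becomes time-integrable with precisely the negative power of $\bar\rho$ needed to be swallowed by $\bar\rho^\alpha$ — which is exactly what the quantitative conditions $\alpha>1$ and $2\beta\le\alpha+1$ in \eqref{vis-c} are there to secure.
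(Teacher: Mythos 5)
Your plan is correct and matches the paper's structure step for step: transport for $\rho$ and $\nabla\rho$; basic and $e^{\kappa t}$-weighted energy with $\kappa(\bar\rho)\sim\bar\rho^{\alpha-1}$; the $(u_t,w_t)$-estimate together with the $\bar\rho$-weighted Stokes/Lam\'e regularity of Lemma~\ref{stokes-e} to close $\mathcal E_u+\mathcal E_w$ and then obtain its $t$-, $t^2$- and $e^{\kappa t}$-weighted versions; the differentiated-in-time estimate giving weighted bounds on $\|\sqrt\rho u_t\|_{L^2}$, $\|\nabla u_t\|_{L^2}$; and finally the smallness of $\int_0^T\|\nabla u\|_{L^\infty}\,\dif t$ via Gagliardo--Nirenberg and the split $(0,1]\cup(1,\infty)$, with the $e^{\kappa t}$ weight playing exactly the role you identify in taming the linear $\xi(\rho)\nabla\times$ coupling under $\alpha>1$, $2\beta\le\alpha+1$. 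The only deviation is cosmetic: the paper works with $u_t,w_t$ throughout rather than your material derivatives $\dot u,\dot w$, which keeps $\mathrm{div}\,u_t=0$ so that Lemma~\ref{stokes-e} applies directly, whereas your variant would have to carry the commutator $\mathrm{div}\,\dot u=-\partial_i u^j\,\partial_j u^i$ through the elliptic estimates.
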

First, since the density satisfies the transport equation \eqref{ins}$_1$ and using \eqref{ins}$_4$, one has the following lemma.
\begin{lemma}\label{uniform.rho}
Under the conditions of Proposition \ref{pr}, it holds that 
	\begin{equation}\label{2.3}
		\bar \rho \le \rho \le C_0 \bar \rho,\quad (x,t)\in \Omega \times [0,T].
	\end{equation}
\end{lemma}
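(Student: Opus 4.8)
The statement to prove is Lemma~\ref{uniform.rho}: under the a priori assumptions \eqref{a1} of Proposition~\ref{pr}, the density remains trapped between $\bar\rho$ and $C_0\bar\rho$ for all $(x,t)\in\Omega\times[0,T]$, given the initial bound \eqref{ia}. The plan is to exploit the fact that $\rho$ solves the pure transport equation $\rho_t+u\cdot\nabla\rho=0$ — here the incompressibility condition $\div u=0$ from \eqref{ins}$_4$ is crucial, since it turns $\rho_t+\div(\rho u)=0$ into a transport equation with no compression/expansion term. Along the flow map $X(t;x)$ generated by the divergence-free velocity field $u$ (which vanishes on $\partial\Omega$, so the flow preserves $\Omega$), the density is constant: $\rho(X(t;x),t)=\rho_0(x)$. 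Consequently $\rho(\cdot,t)$ and $\rho_0$ have the same range, and the initial bound $\bar\rho\le\rho_0\le C_0\bar\rho$ from \eqref{ia} propagates verbatim.

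First I would record that the local strong solution has enough regularity to make this argument rigorous: from \eqref{l-r} we have $\nabla u\in C([0,T_0];H^1)$, hence $u\in C([0,T_0];H^1_0\cap H^2)\hookrightarrow C([0,T_0];C(\bar\Omega))$ in three dimensions (Sobolev embedding $H^2\hookrightarrow C^0$), and in fact $u$ is Lipschitz in $x$ uniformly on $[0,T_0]$ because $W^{2,q}\hookrightarrow C^1$ for $q>3$ — although on the time scale of the proposition one only needs $u\in L^1(0,T;W^{1,\infty})$, which will be part of the bootstrap but for the purpose of this lemma can be taken as a consequence of the assumed smoothness of the solution (the proposition is stated for smooth solutions). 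With $u$ Lipschitz in space, the ODE $\frac{d}{dt}X(t;x)=u(X(t;x),t)$, $X(0;x)=x$, has a unique flow $X(t;\cdot):\bar\Omega\to\bar\Omega$ which is a homeomorphism (volume-preserving, since $\div u=0$). Then I would compute $\frac{d}{dt}\rho(X(t;x),t)=\rho_t+u\cdot\nabla\rho=\rho_t+\div(\rho u)-\rho\,\div u=0$, using \eqref{ins}$_1$ and \eqref{ins}$_4$, so $\rho(X(t;x),t)\equiv\rho_0(x)$.

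Finally, for any $(y,t)\in\Omega\times[0,T]$, writing $y=X(t;x)$ for the unique preimage $x=X(t;\cdot)^{-1}(y)\in\Omega$, we get $\rho(y,t)=\rho_0(x)\in[\bar\rho,C_0\bar\rho]$ by \eqref{ia}, which is exactly \eqref{2.3}. Note that this argument does not use the a priori bounds \eqref{a1} at all — Lemma~\ref{uniform.rho} holds for any strong solution with initial data satisfying \eqref{ia}; the hypotheses of Proposition~\ref{pr} are merely the standing assumptions under which all later lemmas are stated. The only mild subtlety, and the closest thing to an obstacle, is justifying the flow-map identity at the regularity level of strong rather than classical solutions: one can either invoke the DiPerna--Lions theory of transport equations (which gives renormalization and hence preservation of $L^\infty$ bounds for divergence-free $u\in L^1_tW^{1,1}_x$ with the stated integrability), or simply observe that the proposition concerns \emph{smooth} solutions, so the classical characteristics argument applies directly and the bound then transfers to the strong solution by the density/approximation procedure used to construct it. Either way the conclusion \eqref{2.3} is immediate.
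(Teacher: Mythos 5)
Your proof is correct and follows exactly the route the paper has in mind: the paper states Lemma~\ref{uniform.rho} as an immediate consequence of the fact that $\rho$ satisfies the transport equation \eqref{ins}$_1$ together with $\div u=0$ from \eqref{ins}$_4$, which is precisely the characteristics argument you spell out in detail. Your additional remarks about regularity and DiPerna--Lions are sensible but not needed here, since the a priori estimates are carried out for smooth solutions.
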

Next, we establish the basic energy inequality of \eqref{ins}.
\begin{lemma}\label{lem:basic-est}
Under the conditions of Proposition \ref{pr} there exists a positive constant $M_1$ such that
\begin{equation}\label{basic-est}
\sup_{0\le t\le T}\bar{\rho}\left(\|u\|_{L^2}^2+\| w\|_{L^2}^2\right)+\bar\rho^{\alpha}\int_{0}^{T}\left( \|\nabla u\|_{L^2}^2+\|\nabla w\|_{L^2}^2\right)\dif t\le C\bar\rho,
\end{equation}
and
\begin{equation}\label{basic-est.2}
\sup_{0\le t\le T}\bar{\rho}e^{\kappa(\bar\rho)t}\left(\|u\|_{L^2}^2+\| w\|_{L^2}^2\right)+\bar\rho^{\alpha}\int_{0}^{T}e^{\kappa(\bar\rho)t}\left( \|\nabla u\|_{L^2}^2+\|\nabla w\|_{L^2}^2\right)\dif t\le C\bar\rho,
\end{equation}
provided $\bar \rho \geq M_1(\Omega,C_0,\bar\mu,\bar\xi,\bar\eta,\bar\lambda,\alpha,\beta,\|\nabla\rho_0\|_{L^q}, \| u_0\|_{H^2}, \| w_0\|_{H^2})$, where $\kappa(\bar{\rho})=\bar{\kappa}\bar{\rho}^{\alpha-1}$ for some constant $\bar\kappa$ independent of $\bar{\rho}$.
\end{lemma}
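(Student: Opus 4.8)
The plan is to first derive the standard energy identity for the micropolar system and then upgrade it to the exponentially weighted version \eqref{basic-est.2} by exploiting the Poincaré inequality together with the large lower bound on $\bar\rho$. Multiplying \eqref{ins}$_2$ by $u$, \eqref{ins}$_3$ by $w$, integrating over $\Omega$, using the transport equation \eqref{ins}$_1$, $\div u=0$, and the Dirichlet boundary conditions, I would obtain
\begin{equation*}
\frac{1}{2}\frac{\dif}{\dif t}\int \rho(|u|^2+|w|^2)\dif x + \int 2(\mu(\rho)+\xi(\rho))|\calD(u)|^2\dif x + \int\big(2\eta(\rho)|\calD(w)|^2 + \lambda(\rho)(\div w)^2 + 4\xi(\rho)|w|^2\big)\dif x = \int 2\xi(\rho)(\D\times w)\cdot u\,\dif x + \int 2\xi(\rho)(\D\times u)\cdot w\,\dif x.
\end{equation*}
The two coupling terms on the right combine, after integration by parts, into an expression controlled by $C\int\xi(\rho)|\D u||w|\dif x$ plus lower-order terms in the spatial derivative of $\xi(\rho)=\bar\xi\rho^\beta$; since $\beta\le(\alpha+1)/2$ and $\mathcal E_\rho(T)\le 3\mathcal E_\rho(0)$ by \eqref{a1}, the latter is harmless. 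Using Korn's inequality to bound $\|\D u\|_{L^2}^2$ by $\|\calD(u)\|_{L^2}^2$, and $\mu(\rho)+\xi(\rho)\ge\bar\mu\bar\rho^\alpha$, the left-hand dissipation dominates; absorbing the coupling term via Cauchy–Schwarz (the $w$-coupling is absorbed into the $\bar\xi\bar\rho^\beta|w|^2$ term and part of the $w$-dissipation, which is legitimate since $\beta>0$ and $\bar\rho$ is large) yields, after integration in time,
\begin{equation*}
\sup_{0\le t\le T}\int\rho(|u|^2+|w|^2)\dif x + \bar\rho^\alpha\int_0^T(\|\D u\|_{L^2}^2+\|\D w\|_{L^2}^2)\dif t \le C\int\rho_0(|u_0|^2+|w_0|^2)\dif x \le C\bar\rho,
\end{equation*}
which is \eqref{basic-est} after using $\rho\ge\bar\rho$ on the left.

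For \eqref{basic-est.2}, I would multiply the differential inequality by $e^{\kappa(\bar\rho)t}$ with $\kappa(\bar\rho)=\bar\kappa\bar\rho^{\alpha-1}$. The extra term produced is $\kappa(\bar\rho)e^{\kappa t}\int\rho(|u|^2+|w|^2)\dif x$, and by the Poincaré inequality on the bounded domain $\Omega$ (with zero boundary data for $u,w$) together with $\rho\le C_0\bar\rho$ we have $\int\rho(|u|^2+|w|^2)\dif x\le C_0\bar\rho C_P(\|\D u\|_{L^2}^2+\|\D w\|_{L^2}^2)$. Thus this term is bounded by $C\bar\kappa\bar\rho^{\alpha-1}\cdot\bar\rho(\|\D u\|_{L^2}^2+\|\D w\|_{L^2}^2)=C\bar\kappa\bar\rho^\alpha(\|\D u\|_{L^2}^2+\|\D w\|_{L^2}^2)$, which for $\bar\kappa$ chosen small enough (depending only on the fixed constants $C_0$, $C_P$, $\bar\mu$, etc., but not on $\bar\rho$) is absorbed by a fixed fraction of the dissipation term $c\bar\rho^\alpha(\|\D u\|_{L^2}^2+\|\D w\|_{L^2}^2)$ on the left. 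Integrating in time then gives \eqref{basic-est.2}. Note the order $\bar\rho^{1-\alpha}$ claimed in the introduction is exactly the scaling that makes this absorption work: the dissipation coefficient is $O(\bar\rho^\alpha)$ while the mass coefficient is $O(\bar\rho)$, so the largest admissible decay rate scales like $\bar\rho^{\alpha-1}$.

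The main obstacle is the treatment of the coupling terms $2\xi(\rho)\D\times w$ and $2\xi(\rho)\D\times u$: because $\xi(\rho)=\bar\xi\rho^\beta$ with $\beta$ possibly as large as $(\alpha+1)/2$, a naive Cauchy–Schwarz split of $\int\xi(\rho)|\D u||w|$ into $\varepsilon\bar\rho^\alpha\|\D u\|_{L^2}^2 + C\varepsilon^{-1}\bar\rho^{2\beta-\alpha}\|w\|_{L^2}^2$ produces a $\|w\|_{L^2}^2$ coefficient of order $\bar\rho^{2\beta-\alpha}$, which must be dominated by the damping term $4\bar\xi\bar\rho^\beta\|w\|_{L^2}^2$; this requires $2\beta-\alpha\le\beta$, i.e. $\beta\le\alpha$, which is implied by $\beta\le(\alpha+1)/2<\alpha$ since $\alpha>1$ — so the constraint \eqref{vis-c} is used precisely here, and one has to be careful that the remaining constants are uniform in $\bar\rho$. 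The commutator/derivative terms arising when integrating the coupling by parts (involving $\nabla\xi(\rho)$) need the bound $\mathcal E_\rho(T)\le 3\mathcal E_\rho(0)$ and Gagliardo–Nirenberg/Sobolev embeddings from Lemma \ref{G-N}; keeping their $\bar\rho$-powers below $\bar\rho^\alpha$ is the routine but delicate bookkeeping that the large-$\bar\rho$ hypothesis is designed to handle.
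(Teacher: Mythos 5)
Your proposal is correct and follows essentially the same strategy as the paper: form the energy identity by pairing \eqref{ins}$_2$--\eqref{ins}$_3$ with $(u,w)$, bound the cross-coupling term $2\bar\xi\int(\rho^\beta w\cdot(\nabla\times u)+w\cdot\nabla\times(\rho^\beta u))\dif x$ by $C\bar\rho^\beta(\|\nabla u\|_{L^2}^2+\|\nabla w\|_{L^2}^2)$ using $\mathcal{E}_\rho(T)\le 3\mathcal{E}_\rho(0)$ and Poincar\'e, absorb it into the $\bar\rho^\alpha$-scaled dissipation since $\beta<\alpha$, and then obtain the weighted estimate by multiplying by $e^{\kappa(\bar\rho)t}$ and absorbing $\tfrac{\kappa}{2}\|\sqrt{\rho}u\|_{L^2}^2+\tfrac{\kappa}{2}\|\sqrt{\rho}w\|_{L^2}^2$ via Poincar\'e for $\bar\kappa$ small. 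The only cosmetic difference is that you route part of the coupling into the zeroth-order damping $4\bar\xi\rho^\beta|w|^2$, whereas the paper simply converts $\|w\|_{L^2}$ into $\|\nabla w\|_{L^2}$ by Poincar\'e and absorbs everything into the $\rho^\alpha$ viscous terms; both work under $\beta\le(\alpha+1)/2<\alpha$ and $\bar\rho$ large, so this is not a substantive departure.
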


\begin{proof}
Making the $L^2$-inner product of \eqref{ins}$_2$--\eqref{ins}$_3$ by $(u, w)^\top$, we obtain after integration by parts that 
\begin{align}\label{b.e.1}
&\frac{1}{2}\frac{\dif}{\dif t}\left(\|\sqrt{\rho} u\|_{L^2}^2+\|\sqrt{\rho}w\|_{L^2}^2\right) + \int(\bar\mu\rho^\alpha+\bar\xi\rho^\beta)|\calD(u)|^2  \dif x\nonumber\\
&+2\bar\eta\int\rho^\alpha|\calD(w)|^2\dif x+4\bar\xi\int\rho^\beta|w|^2\dif x+\bar\lambda\int\rho^\alpha(\div w)^2\dif x\nonumber\\
=~&2\bar\xi\int\left(\rho^\beta w\cdot(\D\times u)+w\cdot\D\times (\rho^\beta u)\right)\dif x\nonumber\\
\leq~&C\bar\rho^\beta\left(\|u\|_{H^1}\|w\|_{H^1}+\|u\|_{L^{\frac{2q}{q-1}}}\|w\|_{L^{\frac{2q}{q-1}}}\|\D\rho\|_{L^q}\right)\nonumber\\
\leq~&C\bar\rho^\beta\left(\|\D u\|_{L^2}^2+\|\D w\|_{L^2}^2\right),
\end{align}
where we have used \eqref{a1}, Poincar\'e's inequality and the fact $\frac{2q}{q-1}\in(2,3)$.

Integrating the above inequality over $(0,T]$ and taking $M_1$ large enough, such that
\begin{equation*}
4C\bar\rho^\beta\left(\|\D u\|_{L^2}^2+\|\D w\|_{L^2}^2\right)\leq\bar\mu\bar\rho^\alpha\int|\calD(u)|^2\dif x+2\bar\eta\bar{\rho}^\alpha\int|\calD(w)|^2\dif x,
\end{equation*}
we obtain 
\begin{equation}
\sup_{0\le t\le T}\bar{\rho}\left(\|u\|_{L^2}^2+\| w\|_{L^2}^2\right)+\bar\rho^{\alpha}\int_{0}^{T}\left( \|\nabla u\|_{L^2}^2+\|\nabla w\|_{L^2}^2\right)\dif t\le C\bar\rho\left(\|u_0\|_{L^2}^2+\|w_0\|_{L^2}^2\right)\le C\bar\rho.
\end{equation}

Multiplying \eqref{b.e.1} by $e^{\kappa t}$ and using Poincar\'e's inequality, we obtain
\begin{align}\label{b.e.2}
&\frac{1}{2}\frac{\dif}{\dif t}\left(e^{\kappa t}\left(\|\sqrt{\rho} u\|_{L^2}^2+\|\sqrt{\rho}w\|_{L^2}^2\right)\right)+e^{\kappa t} \int(\bar\mu\rho^\alpha+\bar\xi\rho^\beta)|\calD(u)|^2\dif x\nonumber\\
&+2\bar\eta e^{\kappa t}\int\rho^\alpha|\calD(w)|^2\dif x+4\bar\xi e^{\kappa t}\int\rho^\beta|w|^2\dif x+\bar\lambda e^{\kappa t}\int\rho^\alpha(\div w)^2\dif x\nonumber\\
\leq~&C\bar\rho^\beta e^{\kappa t}\left(\|\D u\|_{L^2}^2+\|\D w\|_{L^2}^2\right)+\frac{\kappa}{2}e^{\kappa t}\left(\|\sqrt{\rho} u\|_{L^2}^2+\|\sqrt{\rho}w\|_{L^2}^2\right)\nonumber\\
\leq~&C\bar\rho^\beta e^{\kappa t}\left(\|\D u\|_{L^2}^2+\|\D w\|_{L^2}^2\right)+\frac{C_0\bar\kappa}{2}\bar{\rho}^\alpha e^{\kappa t}\left(\|\D u\|_{L^2}^2+\|\D w\|_{L^2}^2\right).
\end{align}
Integrating the above inequality over $(0,T]$ and taking $\bar\kappa$ sufficiently small, such that
\begin{equation*}
4C_0\bar\kappa\bar{\rho}^\alpha\left(\|\D u\|_{L^2}^2+\|\D w\|_{L^2}^2\right)\leq\bar\mu\bar\rho^\alpha\int|\calD(u)|^2\dif x+2\bar\eta\bar{\rho}^\alpha\int|\calD(w)|^2\dif x,
\end{equation*}
we complete the proof of Lemma \ref{lem:basic-est}.
\end{proof}

Next, we have the following high-order estimate of the velocities which will be used frequently.

\begin{lemma}\label{lem:D2u}
Under the conditions of Proposition \ref{pr}, it holds that 
	\begin{equation}\label{H2}
	    \| u\|_{H^2} \leq C\left(\bar\rho^{\frac{1}{2}-\alpha}\|\sqrt{\rho} u_t\|_{L^2}
+\bar\rho^{2-2\alpha}\|\nabla u\|_{L^2}^3+\bar{\rho}^{\beta-\alpha}\|\D\times w\|_{L^2}\right),
	\end{equation}
    \begin{equation}\label{H2w}
	    \| w\|_{H^2} \leq C\left(\bar\rho^{\frac{1}{2}-\alpha}\|\sqrt{\rho} w_t\|_{L^2}
+\bar\rho^{2-2\alpha}\|\nabla u\|_{L^2}^2\|\nabla w\|_{L^2}+\bar{\rho}^{\beta-\alpha}\|\D u\|_{L^2}\right),
	\end{equation}
        \begin{equation}\label{W2q}
	    \| u\|_{W^{2,q}} \leq C \Big(\bar\rho^{-\alpha}  \|\rho u_t\|_{L^q} +  \bar \rho^{(1-\alpha)\frac{5q-6}{q}} \|\nabla u\|_{L^2}^{\frac{6(q-1)}{q}}+\bar{\rho}^{\beta-\alpha}\|\D\times w\|_{L^q}\Big),
	\end{equation}	
    and
    \begin{equation}\label{W2qw}
	    \| w\|_{W^{2,q}} \leq C \Big(\bar\rho^{-\alpha}  \|\rho w_t\|_{L^q} +  \bar \rho^{(1-\alpha)\frac{5q-6}{q}} \|\nabla u\|_{L^2}^{\frac{5q-6}{q}}\|\D w\|_{L^2}+\bar{\rho}^{\beta-\alpha}\|\D u\|_{L^q}\Big).
	\end{equation}
\end{lemma}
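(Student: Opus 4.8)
\textbf{Proof plan for Lemma \ref{lem:D2u}.}

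The plan is to rewrite each momentum equation as a stationary density-dependent Stokes system of the form treated in Lemma \ref{stokes-e}, with a forcing term collecting the material derivative, the convective term, and the linear coupling term, and then simply invoke the two regularity estimates \eqref{stokes-2} and \eqref{stokes-q}. Concretely, for \eqref{H2} and \eqref{W2q} I would put \eqref{ins}$_2$ in the form $-\mathrm{div}(2\bar\mu\rho^\alpha\calD(u))+\nabla P = F$ with
\[
F = -\rho u_t - \rho u\cdot\nabla u + 2\,\mathrm{div}(\xi(\rho)\calD(u)) + 2\xi(\rho)\nabla\times w,
\]
after normalizing the pressure so that $\int P/\rho^\alpha\,\dif x=0$ (which only changes $P$ by a constant and does not affect $\nabla P$). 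Then \eqref{stokes-2} gives
\[
\|u\|_{H^2}+\Big\|\tfrac{P}{\rho^\alpha}\Big\|_{H^1}\le C\big(\bar\rho^{-\alpha}+\bar\rho^{-\alpha-\frac{q}{q-3}}\|\nabla\rho\|_{L^q}^{\frac{q}{q-3}}\big)\|F\|_{L^2},
\]
and since $\mathcal{E}_\rho(T)\le 3\mathcal{E}_\rho(0)$ by \eqref{a1} the bracket is bounded by $C\bar\rho^{-\alpha}$ up to a harmless factor depending on $\|\nabla\rho_0\|_{L^q}$; similarly \eqref{stokes-q} yields the $W^{2,q}$ bound with prefactor $C\bar\rho^{-\alpha}$ after the same reduction. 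So the whole lemma reduces to estimating $\|F\|_{L^2}$ and $\|F\|_{L^q}$ term by term.

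For the $L^2$ estimate: $\|\rho u_t\|_{L^2}\le C_0\bar\rho\|\sqrt\rho u_t\|_{L^2}/\sqrt{\bar\rho}\le C\bar\rho^{1/2}\|\sqrt\rho u_t\|_{L^2}$ using \eqref{2.3}; $\|\rho u\cdot\nabla u\|_{L^2}\le C\bar\rho\|u\|_{L^6}\|\nabla u\|_{L^3}\le C\bar\rho\|\nabla u\|_{L^2}\|\nabla u\|_{L^2}^{1/2}\|u\|_{H^2}^{1/2}$ by Sobolev and Gagliardo--Nirenberg (Lemma \ref{G-N}), then absorb the $\|u\|_{H^2}^{1/2}$ into the left-hand side via Young's inequality, which after multiplying through by the $\bar\rho^{-\alpha}$ prefactor and solving for $\|u\|_{H^2}$ produces the $\bar\rho^{2-2\alpha}\|\nabla u\|_{L^2}^3$ term; the term $\mathrm{div}(\xi(\rho)\calD(u))=\xi(\rho)\mathrm{div}\,\calD(u)+\nabla\xi(\rho)\cdot\calD(u)$ has $L^2$ norm bounded by $C\bar\rho^\beta\|u\|_{H^2}+C\bar\rho^{\beta-1}\|\nabla\rho\|_{L^q}\|\nabla u\|_{L^{\frac{2q}{q-3}}}$; the first part, after multiplying by $\bar\rho^{-\alpha}$, is $C\bar\rho^{\beta-\alpha}\|u\|_{H^2}$ and is absorbed into the left side precisely because $\beta-\alpha<0$ and $\bar\rho$ is large; the second part is controlled by interpolation and Young again, contributing to the lower-order terms; finally $\|\xi(\rho)\nabla\times w\|_{L^2}\le C\bar\rho^\beta\|\nabla\times w\|_{L^2}$, which after the $\bar\rho^{-\alpha}$ factor gives exactly the $\bar\rho^{\beta-\alpha}\|\nabla\times w\|_{L^2}$ term in \eqref{H2}. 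The estimates \eqref{H2w}, \eqref{W2q}, \eqref{W2qw} follow the same pattern: for the $w$-equation one writes $-\mathrm{div}(2\bar\eta\rho^\alpha\calD(w)) = \tilde F$ with $\tilde F$ collecting $-\rho w_t-\rho u\cdot\nabla w-4\xi(\rho)w+\nabla(\lambda(\rho)\mathrm{div}\,w)+2\xi(\rho)\nabla\times u$, and the only structural differences are the extra zeroth-order term $4\xi(\rho)w$ (handled by Poincaré, $\|w\|_{L^2}\le C\|\nabla w\|_{L^2}$, giving a $\bar\rho^{\beta-\alpha}\|\nabla w\|_{L^2}$ contribution absorbed into the coupling term estimate) and the $\lambda(\rho)\mathrm{div}\,w$ term (lower order, treated like the $\xi(\rho)\calD(u)$ term above and absorbed since $\alpha$ appears in the prefactor too). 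For the $W^{2,q}$ versions one uses the $C(\bar\Omega)$-embedding line of Lemma \ref{G-N} to handle $\|\rho u\cdot\nabla u\|_{L^q}\le C\bar\rho\|u\|_{L^\infty}\|\nabla u\|_{L^q}$ and then Gagliardo--Nirenberg on a bounded domain, tracking powers of $\bar\rho$ carefully to land the exponent $(1-\alpha)\frac{5q-6}{q}$.

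The main obstacle is bookkeeping rather than a genuine mathematical difficulty: one must verify that every ``nonlinear/coupling'' term that carries a factor $\|u\|_{H^2}$ (or $\|w\|_{H^2}$, resp. $\|u\|_{W^{2,q}}$) on the right side comes with a coefficient of the form $C\bar\rho^{\beta-\alpha}$ or $C\bar\rho^{(\text{something})<0}\cdot(\text{small data quantity})^{1/2}$, so that choosing $\bar\rho\ge M_1$ large lets these be absorbed into the left-hand side; this is exactly where the hypotheses $\alpha>1$ and $0<\beta\le\frac{\alpha+1}{2}$ from \eqref{vis-c} enter, guaranteeing $\beta-\alpha<0$ and that the convective-term exponent stays subcritical. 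A secondary subtlety is the pressure normalization: the Stokes lemma requires $\int P/\rho^\alpha\,\dif x=0$, so one should note at the outset that replacing $P$ by $P-\fint P/\rho^\alpha$ changes nothing in the PDE and is permitted. Once these absorptions are justified, the four inequalities drop out by rearranging, and the dependence of $M_1$ is precisely on the quantities listed in Proposition \ref{pr}.
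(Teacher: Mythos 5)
Your overall strategy is the same one the paper uses (and references from \cite{Huang-Li-Zhang-arxiv}): rewrite each momentum equation as a density-dependent elliptic problem, invoke the $H^2$/$W^{2,q}$ regularity with prefactor $C\bar\rho^{-\alpha}$, and then estimate the forcing term by term, absorbing the pieces that carry a factor of $\|u\|_{H^2}$ or $\|w\|_{H^2}$ because their $\bar\rho$-prefactor is $\bar\rho^{\beta-\alpha}$ or smaller. Your treatment of the $u$-equation is essentially correct (and even a bit more careful than the paper's one-line remark, which silently drops the $\mathrm{div}(\xi(\rho)\calD(u))$ contribution; as you note, it is absorbable since $\beta<\alpha$).

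There is, however, a genuine gap in your handling of the $w$-equation, in two places. First, Lemma \ref{stokes-e} is a \emph{Stokes} regularity result — it is stated for the system with the divergence constraint $\mathrm{div}\,u=0$ and an unknown pressure. The $w$-equation has neither a divergence constraint nor a pressure, so Lemma \ref{stokes-e} does not apply to it; what is needed is a separate $W^{2,p}$ regularity estimate for the density-dependent Lam\'e-type operator
\[
\mathcal{L}w := -2\,\mathrm{div}\bigl(\eta(\rho)\calD(w)\bigr)-\nabla\bigl(\lambda(\rho)\,\mathrm{div}\,w\bigr),
\qquad w|_{\partial\Omega}=0,
\]
which would have to be proved along the lines of Lemma \ref{stokes-e} but is not the same statement. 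Second, your proposal to put the term $\nabla(\lambda(\rho)\mathrm{div}\,w)$ into the forcing $\tilde F$ and ``absorb'' it does not work: $\lambda(\rho)=\bar\lambda\rho^\alpha$ carries the \emph{same} power of $\rho$ as the principal coefficient $\eta(\rho)=\bar\eta\rho^\alpha$, so $\|\nabla(\lambda(\rho)\mathrm{div}\,w)\|_{L^2}\sim\bar\rho^\alpha\|w\|_{H^2}+\text{(lower order)}$, and after multiplying by the prefactor $C\bar\rho^{-\alpha}$ one is left with $C'\|w\|_{H^2}$ on the right with a constant $C'$ that is \emph{not} small (e.g.\ it can be large when $|\bar\lambda|$ is large relative to $\bar\eta$, which is permitted by $2\bar\eta+3\bar\lambda\ge 0$). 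This term is genuinely the same order as the left-hand side and must remain part of the elliptic operator, not be treated as a forcing. The fix is to prove the $\bar\rho^{-\alpha}$ regularity estimate directly for the full operator $\mathcal{L}$ (for which the ellipticity constant involves $\bar\eta$ and $\bar\lambda$), and only then set $\tilde F=-\rho w_t-\rho u\cdot\nabla w-4\xi(\rho)w+2\xi(\rho)\nabla\times u$ and estimate term by term exactly as you do.

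Once that Lam\'e analogue of Lemma \ref{stokes-e} is in hand, your term-by-term bookkeeping for \eqref{H2w} and \eqref{W2qw} goes through: the $\rho u\cdot\nabla w$ term produces the $\bar\rho^{2-2\alpha}\|\nabla u\|_{L^2}^2\|\nabla w\|_{L^2}$ piece via Gagliardo--Nirenberg and Young, the $4\xi(\rho)w$ piece is lower order by Poincar\'e, and $2\xi(\rho)\nabla\times u$ gives $\bar\rho^{\beta-\alpha}\|\nabla u\|_{L^2}$ (and $\bar\rho^{\beta-\alpha}\|\nabla u\|_{L^q}$ in the $W^{2,q}$ case). So the idea is right; the gap is that the elliptic machinery you invoke for $w$ is not the stated Stokes lemma and the $\lambda$-term cannot be shuffled to the right-hand side.
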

\begin{proof}
Let $F=-\rho u_t-\rho(u\cdot\D)u+2\xi(\rho)\D\times w$ in Lemma \ref{stokes-e}, then the proof of Lemma \ref{lem:D2u} is similar to that of \cite{Huang-Li-Zhang-arxiv}, and we omit it here.
\end{proof}

Now we are ready to derive the estimates of $\mathcal{E}_u(T)$ and $\mathcal{E}_w(T)$.
\begin{lemma}\label{L_2}
Under the conditions of Proposition \ref{pr}, there exists a positive constant $M_2$ such that
\begin{equation}\label{Euw}
\mathcal{E}_u(T)+\mathcal{E}_w(T) \le 2K\bar\rho^{\alpha},
\end{equation}
\begin{equation}\label{tdu}
\sup_{t\in[0,T]}\bar{\rho}^\alpha t\left(\|\nabla u \|_{L^2}^2+\|\D w\|_{L^2}^2\right)
+\int_0^{T}t\left(\|\sqrt{\rho} u_t \|_{L^2}^2+\|\sqrt{\rho} w_t \|_{L^2}^2\right)\dif t
\leq C\bar\rho ,
\end{equation}
\begin{equation}\label{t2du}
\sup_{t\in[0,T]}\bar{\rho}^\alpha t^2\left(\|\nabla u \|_{L^2}^2+\|\D w\|_{L^2}^2\right)
+\int_0^{T}t^2\left(\|\sqrt{\rho} u_t \|_{L^2}^2+\|\sqrt{\rho} w_t \|_{L^2}^2\right)\dif t
\leq C\bar\rho ,
\end{equation}
and
\begin{equation}\label{etdu}
\sup_{t\in[0,T]}\bar{\rho}^\alpha e^{\kappa(\bar{\rho})t}\left(\|\nabla u \|_{L^2}^2+\|\D w\|_{L^2}^2\right)
+\int_0^{T}e^{\kappa(\bar{\rho})t}\left(\|\sqrt{\rho} u_t \|_{L^2}^2+\|\sqrt{\rho} w_t \|_{L^2}^2\right)\dif t
\leq C\bar{\rho}^\alpha,
\end{equation}
provided $\bar \rho \geq M_2(\Omega,C_0,\bar\mu,\bar\xi,\bar\eta,\bar\lambda,\alpha,\beta,\|\nabla\rho_0\|_{L^q}, \| u_0\|_{H^2}, \| w_0\|_{H^2})$.
\end{lemma}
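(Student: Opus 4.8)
The plan is to differentiate the momentum and micro-rotation equations in time, test against $(u_t,w_t)$, and run a Grönwall-type argument, following the strategy of \cite{Huang-Li-Zhang-arxiv} but carefully tracking the coupling terms $2\xi(\rho)\nabla\times w$ and $2\xi(\rho)\nabla\times u$. First I would establish \eqref{Euw}: testing \eqref{ins}$_2$ by $u_t$ and \eqref{ins}$_3$ by $w_t$ and summing, the leading dissipation produces $\tfrac{d}{dt}\int(\bar\mu\rho^\alpha+\bar\xi\rho^\beta)|\calD(u)|^2\,dx$ etc.\ on the left and $\|\sqrt{\rho}u_t\|_{L^2}^2+\|\sqrt\rho w_t\|_{L^2}^2$ on the left as well, while the convective terms $\rho(u\cdot\nabla)u\cdot u_t$ are controlled using $\|u\|_{L^6}\le C\|\nabla u\|_{L^2}$, the $H^2$-bound \eqref{H2}, and the bootstrap hypothesis \eqref{a1}; the new coupling terms $2\xi(\rho)(\nabla\times w)\cdot u_t$ and $2\xi(\rho)(\nabla\times u)\cdot w_t$ are bounded by $C\bar\rho^\beta\|\nabla w\|_{L^2}\|u_t\|_{L^2}\le\varepsilon\|\sqrt\rho u_t\|_{L^2}^2+C\bar\rho^{2\beta-1}\|\nabla w\|_{L^2}^2$ (and symmetrically), where the exponent $2\beta-1\le\alpha$ by \eqref{vis-c} makes these absorbable into the basic estimate \eqref{basic-est} times an appropriate power of $\bar\rho$; integrating in $t$, using the smallness coming from $\bar\rho\ge M_2$ to absorb all $\bar\rho^{\text{lower power}}$ terms, and choosing $K$ in terms of the initial data gives \eqref{Euw} (with the factor $2$ rather than $3$, closing the bootstrap).

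Next I would prove the time-weighted estimates \eqref{tdu} and \eqref{t2du}. Multiplying the differentiated energy identity by $t$ (resp.\ $t^2$) introduces the extra terms $\int(\bar\mu\rho^\alpha+\dots)|\calD(u)|^2\,dx$ (resp.\ $2t\int\dots$) on the right, which are exactly $\bar\rho^\alpha\|\nabla u\|_{L^2}^2+\bar\rho^\alpha\|\nabla w\|_{L^2}^2$ up to constants; these are then integrable in $t$ by \eqref{basic-est}, so their time integral is $\le C\bar\rho$. This is the step where the paper warns that the naive approach fails: after multiplying by $t$, the coupling terms contribute something like $\int_0^T t\,\bar\rho^{2\beta-1}(\|\nabla u\|_{L^2}^2+\|\nabla w\|_{L^2}^2)\,dt$, and $t(\|\nabla u\|^2+\|\nabla w\|^2)$ is \emph{not} obviously integrable from \eqref{basic-est} alone (that estimate only gives the un-weighted integral). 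The resolution, as flagged in the introduction, is to use \eqref{basic-est.2}: since $e^{\kappa(\bar\rho)t}\ge 1+\kappa(\bar\rho)t$ and $\kappa(\bar\rho)=\bar\kappa\bar\rho^{\alpha-1}$, one gets $\int_0^T t(\|\nabla u\|_{L^2}^2+\|\nabla w\|_{L^2}^2)\,dt\le \bar\kappa^{-1}\bar\rho^{1-\alpha}\int_0^T e^{\kappa t}(\|\nabla u\|_{L^2}^2+\|\nabla w\|_{L^2}^2)\,dt\le C\bar\rho^{2-2\alpha}$, which with the $\bar\rho^{2\beta-1}$ prefactor is a negative (or at worst absorbable) power of $\bar\rho$ because $2\beta-1+2-2\alpha\le 2-\alpha<1$; similarly one iterates once more to handle the $t^2$ weight using the $t^1$ estimate just obtained. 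The main obstacle is precisely this: making the powers of $\bar\rho$ line up so that every coupling-induced term is either absorbed into the left-hand dissipation by taking $\bar\rho$ large, or bounded by $C\bar\rho$ (resp.\ $C\bar\rho^\alpha$) via \eqref{basic-est}--\eqref{basic-est.2}; this requires keeping scrupulous track of the exponents and repeatedly invoking $\alpha>1$ and $0<\beta\le\tfrac{\alpha+1}{2}$.

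Finally, \eqref{etdu} is obtained by the same differentiated-energy computation but multiplying by $e^{\kappa(\bar\rho)t}$ instead of a power of $t$. The extra term from $\tfrac{d}{dt}e^{\kappa t}$ is $\kappa(\bar\rho)e^{\kappa t}\int(\bar\mu\rho^\alpha+\dots)|\calD(u)|^2\,dx\sim\bar\kappa\bar\rho^{\alpha-1}\cdot\bar\rho^\alpha e^{\kappa t}(\|\nabla u\|_{L^2}^2+\|\nabla w\|_{L^2}^2)$, whose time integral is $\le C\bar\kappa\bar\rho^{\alpha-1}\cdot\bar\rho\cdot\bar\rho^{1-\alpha}\cdot\bar\rho^{\alpha-1}=C\bar\rho^\alpha$ after using \eqref{basic-est.2} (the $\bar\rho^{\alpha-1}$ in $\kappa$ times the $\bar\rho^{1-\alpha}$ gained from pulling $e^{\kappa t}$ out); the exponential-weighted coupling terms are handled exactly as before, now with an $e^{\kappa t}$ factor supplied by \eqref{basic-est.2}. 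Then a Grönwall inequality in the quantity $\bar\rho^\alpha e^{\kappa t}(\|\nabla u\|_{L^2}^2+\|\nabla w\|_{L^2}^2)+\int_0^t e^{\kappa s}(\|\sqrt\rho u_s\|_{L^2}^2+\|\sqrt\rho w_s\|_{L^2}^2)\,ds$ closes the estimate, with the $H^2$-bounds of Lemma \ref{lem:D2u} used to convert the cubic convective terms into powers of $\|\nabla u\|_{L^2}$ that are controlled by $K\bar\rho^\alpha$ from \eqref{Euw}. Throughout I would take $M_2\ge M_1$ and $M_2$ large enough that all the absorptions in the three steps are simultaneously valid.
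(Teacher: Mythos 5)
Your proposal follows essentially the same route as the paper: test \eqref{ins}$_2$--\eqref{ins}$_3$ by $(u_t,w_t)$, absorb the linear coupling terms into the dissipation for $\bar\rho$ large, and then multiply the resulting differential inequality for $\calE(t)$ by $t^k$ or $e^{\kappa(\bar\rho)t}$ and invoke the exponentially-weighted basic estimate \eqref{basic-est.2}. One small correction: for the $t^2$ weight the paper does not iterate from the $t$-weighted case, but directly re-uses \eqref{basic-est.2} via $t^2\lesssim\kappa(\bar\rho)^{-2}e^{\kappa(\bar\rho)t}$, since the term $2t\calE(t)$ produced by $\tfrac{d}{dt}(t^2\calE)$ is itself not controlled by the unweighted \eqref{basic-est} alone.
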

\begin{proof}
Making the $L^2$-inner product of \eqref{ins}$_2$--\eqref{ins}$_3$ with $(u_t, w_t)^\top$ and integrating by parts, we have 
\begin{align}\label{51}
&\frac{1}{2}\frac{\dif}{\dif t}\left(\int \left(\bar\mu\rho^\alpha+\bar\xi\rho^\beta\right) |\calD(u)|^2\dif x+2\bar\eta\int\rho^\alpha|\calD(w)|^2\dif x+\bar\lambda\int\rho^\alpha(\div w)^2\dif x\right)\nonumber\\
&+2\bar\xi\frac{\dif}{\dif t}\int\rho^\beta|w|^2\dif x+  \int\rho (|u_t|^2+|w_t|^2)\dif x\nonumber \\
=~& -\int  \rho u \cdot \nabla u \cdot u_t \dif x -\int  \rho u \cdot \nabla w \cdot w_t \dif x+2\bar\xi\int\rho^\beta\left(u_t\cdot\D\times w+w_t\cdot\D\times u\right)\dif x\nonumber\\
&+\frac{1}{2}\int(\alpha\bar\mu\rho^{\alpha-1}+\beta\bar\xi\rho^{\beta-1})\rho_t|\calD(u)|^2\dif x+\frac{\bar\eta\alpha}{2}\int\rho^{\alpha-1}\rho_t|\calD(w)|^2\dif x\nonumber\\
&+\frac{\bar\lambda\alpha}{2}\int\rho^{\alpha-1}\rho_t(\div w)^2\dif x+2\bar\xi\beta\int\rho^{\beta-1}\rho_t|w|^2\dif x \nonumber\\
:=~&\sum\limits_{j=1}^7I_j.
\end{align}
It follows from H\"older and Sobolev inequalities that 
\begin{align}\label{52}
I_1+I_2=~&-\int  \rho u \cdot \nabla u \cdot u_t \dif x-\int  \rho u \cdot \nabla w \cdot w_t \dif x\nonumber\\
\leq~& C\bar\rho^\frac{1}{2}\|u\|_{L^6}\left(\|\sqrt{\rho} u_t\|_{L^2}\|\nabla u\|_{L^3}+\|\sqrt{\rho} w_t\|_{L^2}\|\nabla w\|_{L^3}\right)\nonumber \\
\leq~& C\bar\rho^\frac{1}{2}\|\sqrt{\rho}u_t\|_{L^2}\|\nabla u\|_{L^2}^\frac{3}{2}\|\nabla u\|_{H^1}^\frac{1}{2}+C\bar\rho^\frac{1}{2}\|\sqrt{\rho} w_t\|_{L^2}\|\nabla u\|_{L^2}\|\nabla w\|_{L^2}^\frac{1}{2}\|\nabla w\|_{H^1}^\frac{1}{2}\nonumber\\
\leq~& C\bar\rho^\frac{1}{2}\|\sqrt{\rho} u_t\|_{L^2}\|\nabla u\|_{L^2}^\frac{3}{2}\left(\bar\rho^{\frac{1}{2}-\alpha}\|\sqrt{\rho} u_t\|_{L^2}+\bar\rho^{2-2\alpha}\|\nabla u\|_{L^2}^3+\bar{\rho}^{\beta-\alpha}\|\D w\|_{L^2}\right)^\frac{1}{2}\nonumber\\
&+C\bar\rho^\frac{1}{2}\|\sqrt{\rho} w_t\|_{L^2}\|\nabla u\|_{L^2}\|\nabla w\|_{L^2}^\frac{1}{2}\left(\bar\rho^{\frac{1}{2}-\alpha}\|\sqrt{\rho}w_t\|_{L^2}
+\bar\rho^{2-2\alpha}\|\nabla u\|_{L^2}^2\|\D w\|_{L^2}+\bar{\rho}^{\beta-\alpha}\|\D u\|_{L^2}\right)^\frac{1}{2}\nonumber\\
\leq~& \frac{1}{6}\left(\|\sqrt{\rho} u_t\|_{L^2}^2+\|\sqrt{\rho} w_t\|_{L^2}^2\right)
+C\bar\rho^{3-2\alpha}\left(\|\nabla u\|_{L^2}^6+\|\D u\|_{L^2}^4\|\D w\|_{L^2}^2\right)+C\bar{\rho}^{1+\beta-\alpha}\|\D u\|_{L^2}^3\|\D w\|_{L^2}\nonumber\\
\leq~& \frac{1}{6}\left(\|\sqrt{\rho} u_t\|_{L^2}^2+\|\sqrt{\rho} w_t\|_{L^2}^2\right)+C\left(\bar{\rho}^{3-2\alpha}+\bar{\rho}^{1+\beta-\alpha}\right)\|\nabla u\|_{L^2}^2,
\end{align}
where we have also used \eqref{H2}, \eqref{H2w} and $\bar{\rho}>1$. Next, we have
\begin{align}\label{53}
I_3=~&2\bar\xi\int\rho^\beta\left(u_t\cdot\D\times w+w_t\cdot\D\times u\right)\dif x \nonumber\\
\leq~& C \bar{\rho}^{\beta-\frac{1}{2}}\left(\|\sqrt{\rho}u_t\|_{L^2}\|\D w\|_{L^2}+\|\sqrt{\rho}w_t\|_{L^2}\|\D u\|_{L^2}\right)\nonumber\\
\leq~& \frac{1}{6}\left(\|\sqrt{\rho} u_t\|_{L^2}^2+\|\sqrt{\rho} w_t\|_{L^2}^2\right)+C\bar{\rho}^{2\beta-1}\left(\|\D u\|_{L^2}^2+\|\D w\|_{L^2}^2\right).
\end{align}
Using \eqref{ins}$_1$, \eqref{H2}, H\"older and Sobolev inequalities, we get
\begin{align}\label{54}
I_4=~&-\frac{1}{2}\int(\alpha\bar\mu\rho^{\alpha-1}+\beta\bar\xi\rho^{\beta-1})u\cdot\D\rho|\calD(u)|^2\dif x\nonumber\\
\leq~&C\bar{\rho}^{\alpha-1}\|u\|_{L^{\frac{2q}{q-2}}}\|\D\rho\|_{L^q}\|\D u\|_{L^4}^2\nonumber\\
\leq~&C\bar{\rho}^{\alpha-1}\|\D u\|_{L^2}^{\frac{1}{2}}\|\D^2u\|_{L^2}^{\frac{3}{2}}\nonumber\\
\leq~&C\bar{\rho}^{\alpha-1}\|\D u\|_{L^2}^{\frac{1}{2}}\left(\bar\rho^{\frac{1}{2}-\alpha}\|\sqrt{\rho}u_t\|_{L^2}+\bar\rho^{2-2\alpha}\|\D u\|_{L^2}^3+\bar{\rho}^{\beta-\alpha}\|\D w\|_{L^2}\right)^{\frac{3}{2}}\nonumber\\
\leq~&\frac{1}{6}\|\sqrt{\rho} u_t\|_{L^2}^2+C\bar{\rho}^{-1-2\alpha}\|\D u\|_{L^2}^2+\bar{\rho}^{2-2\alpha}\|\D u\|_{L^2}^5+C\bar{\rho}^{\frac{3}{2}\beta-\frac{\alpha}{2}-1}\|\D u\|_{L^2}^{\frac{1}{2}}\|\D w\|_{L^2}^{\frac{3}{2}}\nonumber\\
\leq~&\frac{1}{6}\|\sqrt{\rho} u_t\|_{L^2}^2+C\bar{\rho}^{2-2\alpha}\|\D u\|_{L^2}^2+C\bar{\rho}^{\frac{3}{2}\beta-\frac{\alpha}{2}-1}\|\D u\|_{L^2}^{\frac{1}{2}}\|\D w\|_{L^2}^{\frac{3}{2}}.
\end{align}
Similarly, it follows from \eqref{ins}$_1$ and \eqref{H2w} that
\begin{align}\label{55}
I_5+I_6=~&\frac{\bar\eta\alpha}{2}\int\rho^{\alpha-1}\rho_t|\calD(w)|^2\dif x+\frac{\bar\lambda\alpha}{2}\int\rho^{\alpha-1}\rho_t(\div w)^2\dif x\nonumber\\
\leq~&C\bar{\rho}^{\alpha-1}\|u\|_{L^{\frac{2q}{q-2}}}\|\D\rho\|_{L^q}\|\D w\|_{L^4}^2\nonumber\\
\leq~&C\bar{\rho}^{\alpha-1}\|\D w\|_{L^2}^{\frac{1}{2}}\|\D^2w\|_{L^2}^{\frac{3}{2}}\nonumber\\
\leq~&C\bar{\rho}^{\alpha-1}\|\D w\|_{L^2}^{\frac{1}{2}}\left(\bar\rho^{\frac{1}{2}-\alpha}\|\sqrt{\rho}w_t\|_{L^2}+\bar\rho^{2-2\alpha}\|\D u\|_{L^2}^2\|\D w\|_{L^2}+\bar{\rho}^{\beta-\alpha}\|\D u\|_{L^2}\right)^{\frac{3}{2}}\nonumber\\
\leq~&\frac{1}{6}\|\sqrt{\rho} w_t\|_{L^2}^2+C\bar{\rho}^{-1-2\alpha}\|\D w\|_{L^2}^2+\bar{\rho}^{2-2\alpha}\|\D u\|_{L^2}^3\|\D w\|_{L^2}^2+C\bar{\rho}^{\frac{3}{2}\beta-\frac{\alpha}{2}-1}\|\D u\|_{L^2}^{\frac{3}{2}}\|\D w\|_{L^2}^{\frac{1}{2}}\nonumber\\
\leq~&\frac{1}{6}\|\sqrt{\rho} w_t\|_{L^2}^2+C\bar{\rho}^{2-2\alpha}\|\D w\|_{L^2}^2+C\bar{\rho}^{\frac{3}{2}\beta-\frac{\alpha}{2}-1}\|\D u\|_{L^2}^{\frac{3}{2}}\|\D w\|_{L^2}^{\frac{1}{2}}.
\end{align}
Next, it follows from \eqref{ins}$_1$ and Poincar\'e's inequality that
\begin{align}\label{56}
I_7\leq C\bar{\rho}^{\alpha-1}\|u\|_{L^{\frac{2q}{q-2}}}\|\D\rho\|_{L^q}\|w\|_{L^4}^2\leq C\bar{\rho}^{\alpha-1}\|w\|_{L^2}^{\frac{1}{2}}\|\D w\|_{L^2}^{\frac{3}{2}}\leq C\bar{\rho}^{\alpha-1}\|\D w\|_{L^2}^2.
\end{align}

Substituting \eqref{52}--\eqref{55} into \eqref{51}, we obtain
\begin{align}\label{t1}
&\frac{1}{2}\frac{\dif}{\dif t}\left(\int \left(\bar\mu\rho^\alpha+\bar\xi\rho^\beta\right) |\calD(u)|^2\dif x+2\bar\eta\int\rho^\alpha|\calD(w)|^2\dif x+\bar\lambda\int\rho^\alpha(\div w)^2\dif x\right)\nonumber\\
&+2\bar\xi\frac{\dif}{\dif t}\int\rho^\beta|w|^2\dif x+\frac{1}{2}\int\rho (|u_t|^2+|w_t|^2)\dif x\nonumber \\
\leq~& C\left(\bar{\rho}^{3-2\alpha}+\bar{\rho}^{1+\beta-\alpha}\right)\|\nabla u\|_{L^2}^2+C\bar{\rho}^{2\beta-1}\left(\|\D u\|_{L^2}^2+\|\D w\|_{L^2}^2\right)+C\bar{\rho}^{2-2\alpha}\|\D u\|_{L^2}^2\nonumber\\
&+C\bar{\rho}^{\frac{3}{2}\beta-\frac{\alpha}{2}-1}\|\D u\|_{L^2}^{\frac{1}{2}}\|\D w\|_{L^2}^{\frac{3}{2}}+C\bar{\rho}^{2-2\alpha}\|\D w\|_{L^2}^2+C\bar{\rho}^{\frac{3}{2}\beta-\frac{\alpha}{2}-1}\|\D u\|_{L^2}^{\frac{3}{2}}\|\D w\|_{L^2}^{\frac{1}{2}}+C\bar{\rho}^{\alpha-1}\|\D w\|_{L^2}^2\nonumber\\
\leq~&C\left(\bar{\rho}^{3-2\alpha}+\bar{\rho}^{1+\beta-\alpha}+\bar{\rho}^{2\beta-1}+\bar{\rho}^{\alpha-1}\right)\left(\|\D u\|_{L^2}^2+\|\D w\|_{L^2}^2\right).
\end{align}

Integrating \eqref{t1} with respect to $t$ over $(0, T]$, we obtain from \eqref{2.3} and \eqref{basic-est} that
\begin{align*}
&\sup_{t\in[0,T]}\bar\rho^{\alpha}\left(\|\D u\|_{L^2}^2+\|\D w\|_{L^2}^2\right)
+\int_0^T\left(\|\sqrt{\rho}u_t\|_{L^2}^2+\|\sqrt{\rho}w_t\|_{L^2}^2\right)\dif t \\
\leq~& C_1\bar\rho^{\alpha}
 +C\left(\bar{\rho}^{3-2\alpha}+\bar{\rho}^{1+\beta-\alpha}+\bar{\rho}^{2\beta-1}+\bar{\rho}^{\alpha-1}\right)\int_0^T\left(\|\D u\|_{L^2}^2+\|\D w\|_{L^2}^2\right)\dif t\\
\leq~& C_1\bar\rho^{\alpha}+C_2\left(\bar{\rho}^{4-3\alpha}+\bar{\rho}^{2+\beta-2\alpha}+\bar{\rho}^{2\beta-\alpha}+1\right).
\end{align*}
Taking $K\geq C_1$ and $M_2$ sufficiently large, such that
\begin{equation}
C_2\left(\bar{\rho}^{4-3\alpha}+\bar{\rho}^{2+\beta-2\alpha}+\bar{\rho}^{2\beta-\alpha}+1\right)\leq C_1\bar\rho^{\alpha},
\end{equation}
we complete the proof of \eqref{Euw}.

For simplicity, we denote
\begin{equation}\label{calE}
\calE(t):=\int\left(\bar\mu\rho^\alpha+\bar\xi\rho^\beta\right) |\calD(u)|^2\dif x+2\bar\eta\int\rho^\alpha|\calD(w)|^2\dif x+\bar\lambda\int\rho^\alpha(\div w)^2\dif x+4\bar\xi\int\rho^\beta|w|^2\dif x.
\end{equation}
It's easy to see that there exist a positive constant $C$ independent of $\bar{\rho}$, such that
\begin{equation}\label{calE:equi}
C^{-1}\bar\rho^{\alpha}\left(\|\nabla u \|_{L^2}^2+\|\nabla w \|_{L^2}^2\right)\leq\calE(t)\leq C\bar\rho^{\alpha}\left(\|\nabla u \|_{L^2}^2+\|\nabla w \|_{L^2}^2\right),
\end{equation}
and \eqref{t1} can be rewritten as
\begin{equation}\label{t1.1}
\frac{\dif}{\dif t}\calE(t)+\left(\|\sqrt{\rho} u_t\|_{L^2}^2+\|\sqrt{\rho} w_t\|_{L^2}^2\right)\leq C\left(\bar{\rho}^{3-2\alpha}+\bar{\rho}^{1+\beta-\alpha}+\bar{\rho}^{2\beta-1}+\bar{\rho}^{\alpha-1}\right)\left(\|\D u\|_{L^2}^2+\|\D w\|_{L^2}^2\right).
\end{equation}
Multiplying \eqref{t1.1} by $t^k$ for $k=1,2$, we obtain
\begin{align}\label{t1.2}
&\frac{\dif}{\dif t}(t^k\calE(t))+t^k\left(\|\sqrt{\rho} u_t\|_{L^2}^2+\|\sqrt{\rho} w_t\|_{L^2}^2\right)\nonumber\\
\leq~& C(t^k+t^{k-1})\left(\bar{\rho}^{3-2\alpha}+\bar{\rho}^{1+\beta-\alpha}+\bar{\rho}^{2\beta-1}+\bar{\rho}^{\alpha-1}\right)\left(\|\D u\|_{L^2}^2+\|\D w\|_{L^2}^2\right)\nonumber\\
\leq~&C(t^k+t^{k-1})\bar{\rho}^\alpha\left(\|\D u\|_{L^2}^2+\|\D w\|_{L^2}^2\right).
\end{align}
Integrating \eqref{t1.2} with respect to $t$ over $(0, T]$ and using \eqref{basic-est.2}, we obtain \eqref{tdu} and \eqref{t2du}.

On the other hand, multiplying \eqref{t1.1} by $e^{\kappa(\bar{\rho})t}$, we get
\begin{align}\label{t1.3}
&\frac{\dif}{\dif t}\left(e^{\kappa(\bar{\rho})t}\calE(t)\right)+e^{\kappa(\bar{\rho})t}\left(\|\sqrt{\rho} u_t\|_{L^2}^2+\|\sqrt{\rho} w_t\|_{L^2}^2\right)\nonumber\\
\leq~& Ce^{\kappa(\bar{\rho})t}(1+\bar{\kappa}\bar{\rho}^{\alpha-1})\left(\bar{\rho}^{3-2\alpha}+\bar{\rho}^{1+\beta-\alpha}+\bar{\rho}^{2\beta-1}+\bar{\rho}^{\alpha-1}\right)\nonumber\\
\leq~&Ce^{\kappa(\bar{\rho})t}\left(\bar{\rho}^{2-\alpha}+\bar{\rho}^\beta+\bar{\rho}^{\alpha+2\beta-2}+\bar{\rho}^{2\alpha-2}\right)\left(\|\D u\|_{L^2}^2+\|\D w\|_{L^2}^2\right).
\end{align}
Integrating \eqref{t1.3} with respect to $t$ over $(0, T]$ and using \eqref{basic-est.2}, we obtain \eqref{etdu}. This completes the proof of Lemma \ref{L_2}.
\end{proof}

In order to close the estimate of $\mathcal{E}_\rho$, we need the following time-weight estimates.
\begin{lemma}
Under the conditions of Proposition \ref{pr}, there exists a positive constant $M_3$ such that
\begin{equation}\label{trut}
\sup_{0\le t\le T}t\left(\|\sqrt{\rho}u_t\|_{L^2}^2+\|\sqrt{\rho}w_t\|_{L^2}^2\right) +  \bar\rho^\alpha\int_{0}^{T} t\left(\|\nabla u_t\|^2_{L^2}+\|\nabla w_t\|^2_{L^2}\right) \dif t \leq C\bar\rho^\alpha,
\end{equation}
and
\begin{equation}\label{ttrut}
\sup_{0\le t\le T}t^2\left(\|\sqrt{\rho}u_t\|_{L^2}^2+\|\sqrt{\rho}w_t\|_{L^2}^2\right) +  \bar\rho^\alpha\int_{0}^{T} t^2\left(\|\nabla u_t\|^2_{L^2}+\|\nabla w_t\|^2_{L^2}\right) \dif t \leq C\bar\rho.
\end{equation}
provided $\bar \rho \geq M_3(\Omega,C_0,\bar\mu,\bar\xi,\bar\eta,\bar\lambda,\alpha,\beta,\|\nabla\rho_0\|_{L^q}, \| u_0\|_{H^2}, \| w_0\|_{H^2})$.
\end{lemma}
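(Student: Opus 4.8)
The plan is to carry out the standard time-derivative energy estimate and then insert the weights $t$ and $t^2$. First I would rewrite \eqref{ins}$_2$--\eqref{ins}$_3$, using \eqref{ins}$_1$ and \eqref{ins}$_4$, in the form
\begin{equation*}
\rho u_t+\rho u\cdot\D u+\D P-2\div((\bar\mu\rho^\alpha+\bar\xi\rho^\beta)\calD(u))=2\bar\xi\rho^\beta\D\times w,
\end{equation*}
\begin{equation*}
\rho w_t+\rho u\cdot\D w+4\bar\xi\rho^\beta w-\div(2\bar\eta\rho^\alpha\calD(w))-\D(\bar\lambda\rho^\alpha\div w)=2\bar\xi\rho^\beta\D\times u,
\end{equation*}
differentiate both equations in $t$, take the $L^2$-inner product of the results with $u_t$ and $w_t$ respectively, and add. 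Using $\rho_t=-\div(\rho u)=-u\cdot\D\rho$ (by \eqref{ins}$_4$) to write $\int\rho u_{tt}\cdot u_t\dif x+\frac12\int\rho_t|u_t|^2\dif x=\frac12\frac{\dif}{\dif t}\int\rho|u_t|^2\dif x$, and Korn's inequality to bound the viscous terms from below by $c\bar\rho^\alpha(\|\D u_t\|_{L^2}^2+\|\D w_t\|_{L^2}^2)$, one obtains a differential inequality of the form
\begin{equation*}
\frac{\dif}{\dif t}\left(\|\sqrt\rho u_t\|_{L^2}^2+\|\sqrt\rho w_t\|_{L^2}^2\right)+c\bar\rho^\alpha\left(\|\D u_t\|_{L^2}^2+\|\D w_t\|_{L^2}^2\right)\le \mathcal{R},
\end{equation*}
where $\mathcal{R}$ collects: the terms from $(\rho u\cdot\D u)_t$ and $(\rho u\cdot\D w)_t$; the terms from differentiating the coefficients, which via $\rho_t=-u\cdot\D\rho$ contribute integrands such as $\rho^{\alpha-1}(u\cdot\D\rho)\,\calD(u):\calD(u_t)$; the terms from differentiating the linear couplings, namely $\int\bar\xi\rho^\beta(\D\times w_t)\cdot u_t\dif x$, $\int\bar\xi\rho^\beta(\D\times u_t)\cdot w_t\dif x$, $\int\beta\bar\xi\rho^{\beta-1}(u\cdot\D\rho)(\D\times w)\cdot u_t\dif x$ and its $u\leftrightarrow w$ analogue; and the term $\partial_t(4\bar\xi\rho^\beta w)\cdot w_t$.

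Next I would estimate each term of $\mathcal{R}$ by H\"older's inequality, the Gagliardo--Nirenberg inequality (Lemma \ref{G-N}), Poincar\'e's inequality (valid since $u_t,w_t$ vanish on $\partial\Omega$), the $H^2$-bounds of Lemma \ref{lem:D2u}, and the a priori bounds \eqref{2.3}, \eqref{basic-est} and \eqref{Euw}. The decisive point is that the linear-coupling contributions $\int\bar\xi\rho^\beta(\D\times w_t)\cdot u_t\dif x$ etc.\ are bounded by $C\bar\rho^\beta\|\D u_t\|_{L^2}\|\D w_t\|_{L^2}$ and, since $\beta\le\frac{\alpha+1}{2}<\alpha$ under \eqref{vis-c}, can be absorbed into the good term $c\bar\rho^\alpha(\|\D u_t\|_{L^2}^2+\|\D w_t\|_{L^2}^2)$ once $\bar\rho$ is large; this is where the hypothesis $\beta<\alpha$ is essential. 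After absorption $\mathcal R$ reduces to $A(t)\left(\|\sqrt\rho u_t\|_{L^2}^2+\|\sqrt\rho w_t\|_{L^2}^2\right)+B(t)$ with $A(t)=C\bar\rho^{a}(\|\D u\|_{L^2}^2+\|\D w\|_{L^2}^2)$ Gr\"onwall-integrable by \eqref{basic-est}, and $B(t)$ a sum of terms of the form $C\bar\rho^{b}(\|\D u\|_{L^2}^2+\|\D w\|_{L^2}^2)$, the exponents $a,b$ being such that $\int_0^T A\dif t$ and the weighted integrals of $B$ stay bounded (indeed small) for large $\bar\rho$.

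For \eqref{trut} I would multiply the differential inequality by $t$, integrate over $(0,T]$, use \eqref{Euw} to bound the extra term $\int_0^T(\|\sqrt\rho u_t\|_{L^2}^2+\|\sqrt\rho w_t\|_{L^2}^2)\dif t$ produced by the product rule, and use the exponentially weighted estimate \eqref{etdu} of Lemma \ref{L_2} together with the elementary bound $t\le C\bar\kappa^{-1}\bar\rho^{1-\alpha}e^{\kappa(\bar\rho)t}$ to control $\int_0^T t(\|\D u\|_{L^2}^2+\|\D w\|_{L^2}^2)\dif t$; a Gr\"onwall argument with bounded factor $\exp(\int_0^T A\dif t)$ then closes \eqref{trut}. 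For \eqref{ttrut} I would instead multiply by $t^2$, integrate, use \eqref{tdu} to absorb the product-rule term $2\int_0^T t(\|\sqrt\rho u_t\|_{L^2}^2+\|\sqrt\rho w_t\|_{L^2}^2)\dif t$ and \eqref{etdu} (via $t^2\le C\bar\kappa^{-2}\bar\rho^{2-2\alpha}e^{\kappa(\bar\rho)t}$) for the weighted dissipation, yielding the $C\bar\rho$ bound.

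The main obstacle is the estimate of the coefficient-derivative terms and the mixed coupling term $\int\rho^{\beta-1}(u\cdot\D\rho)(\D\times w)\cdot u_t\dif x$: since $\D\rho$ lies only in $L^q$, one must carefully distribute the integrability among $u$ (placed in $L^6$ or $L^{2q/(q-2)}$), $\D u,\D w$ (interpolated between $L^2$ and $H^1$, the $H^1$ part then replaced through Lemma \ref{lem:D2u}) and $\D u_t$ (kept in $L^2$ after Poincar\'e), and then check that every resulting power of $\bar\rho$ is dominated by $\bar\rho^\alpha$ using $\alpha>1$ and $\beta\le\frac{\alpha+1}{2}$. Verifying all of these exponent inequalities is the delicate, problem-specific part; the remaining time-weighted Gr\"onwall bookkeeping is routine.
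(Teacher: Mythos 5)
Your proposal follows the same route as the paper: differentiate \eqref{ins}$_2$--\eqref{ins}$_3$ in $t$, test with $(u_t,w_t)$, absorb the $\bar\xi\rho^\beta$ angular-coupling contribution $C\bar\rho^\beta\|\D u_t\|_{L^2}\|\D w_t\|_{L^2}$ into the $\bar\mu\rho^\alpha$-, $\bar\eta\rho^\alpha$-dissipation using $\beta<\alpha$, estimate the remaining commutator terms through \eqref{2.3}, Lemma \ref{lem:D2u}, \eqref{basic-est} and the a priori bound \eqref{a1}, and then run a time-weighted Gr\"onwall argument. Whether one tests with $(tu_t,tw_t)$ directly (as in the paper) or with $(u_t,w_t)$ and multiplies the resulting differential inequality by $t$ or $t^2$ afterwards (as you do) is immaterial. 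The only genuine divergence is in how you bound the weighted source integrals: you propose $t\lesssim\bar\rho^{1-\alpha}e^{\kappa(\bar\rho)t}$ and the exponential estimate (which, note, is \eqref{basic-est.2}, not \eqref{etdu} -- the latter does not control $\int_0^Te^{\kappa t}(\|\D u\|_{L^2}^2+\|\D w\|_{L^2}^2)\dif t$), whereas the paper keeps these terms quartic and uses the factorization
\begin{equation*}
\int_0^T t\left(\|\D u\|_{L^2}^4+\|\D w\|_{L^2}^4\right)\dif t
\le \sup_{t}\,t\left(\|\D u\|_{L^2}^2+\|\D w\|_{L^2}^2\right)\int_0^T\left(\|\D u\|_{L^2}^2+\|\D w\|_{L^2}^2\right)\dif t\le C\bar\rho^{2(1-\alpha)}
\end{equation*}
via \eqref{tdu} and \eqref{basic-est}. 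Both give the same power of $\bar\rho$. One bookkeeping point you should add: the terms $J_2+J_3$ (convection) produce contributions cubic in $\|\sqrt\rho u_t\|_{L^2}$, so the Gr\"onwall coefficient is not simply $C\bar\rho^a(\|\D u\|_{L^2}^2+\|\D w\|_{L^2}^2)$ but also involves $(\|\D u\|_{L^2}+\|\D w\|_{L^2})(\|\sqrt\rho u_t\|_{L^2}+\|\sqrt\rho w_t\|_{L^2})$, whose time integral is controlled by Cauchy--Schwarz, \eqref{basic-est} and \eqref{a1}/\eqref{Euw}. With that noted, your plan is sound and essentially reproduces the paper's proof.
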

\begin{proof}
Taking the $t$-derivative of \eqref{ins}$_2$--\eqref{ins}$_3$ and multiply the resulting equation by $(tu_t, tw_t)^\top$, after integration by parts, we obtain
\begin{align}\label{k2}
&\frac{t}{2}\frac{\dif}{\dif t}\int\rho(|u_t|^2+|w_t|^2)\dif x+2t\int\left( \bar{\mu}\rho^\alpha+\bar{\xi}\rho^\beta\right)|\calD(u_t)|^2\dif x \nonumber \\
&+4\bar{\xi}t\int\rho^\beta|w_t|^2\dif x+2\bar{\eta}t\int\rho^\alpha|\calD(w_t)|^2\dif x+\bar{\lambda}t\int\rho^\alpha(\div w_t)^2\dif x \nonumber\\
=~&t\int u\cdot\D\rho(|u_t|^2+|w_t|^2)\dif x-t\int\rho u_t\cdot\nabla u\cdot u_t \dif x-t\int\rho u_t\cdot\nabla w\cdot w_t\dif x \nonumber \\
&+t\int(u\cdot\D\rho)(u\cdot\nabla u\cdot u_t)\dif x+t\int(u\cdot\D\rho)(u\cdot\nabla w\cdot w_t)\dif x+4\beta\bar{\xi}t\int\rho^{\beta-1}(u\cdot\D\rho)w\cdot w_t\dif x \nonumber \\
&+2t\int\left(\alpha\bar{\mu}\rho^{\alpha-1}+\beta\bar{\xi}\rho^{\beta-1}\right)(u\cdot\D\rho)\calD(u):\D u_t\dif x \nonumber \\
&+2\alpha\bar{\eta}t\int\rho^{\alpha-1}(u\cdot\D\rho)\calD(w):\D w_t\dif x+\alpha\bar{\lambda}t\int\rho^{\alpha-1}(u\cdot\D\rho)\div w\div w_t\dif x \nonumber \\
&-2\beta\bar{\xi}t\int\rho^{\beta-1}(u\cdot\D\rho)\left(u_t\cdot(\D\times w)+w_t\cdot(\D\times u)\right)\dif x+2\bar{\xi}t\int\rho^\beta\left(u_t\cdot(\D\times w_t)+w_t\cdot(\D\times u_t)\right)\dif x \nonumber\\
=:~&\sum_{i=1}^{11}J_i,
\end{align}
where the terms $J_i$ ($i=1, \cdots, 11$) can be estimated as follows.
\begin{align}\label{J1}
J_1\leq~& Ct\int|\nabla\rho\cdot u|(|u_t|^2+|w_t|^2)\dif x \nonumber\\
\leq~&C t\|\nabla \rho\|_{L^q} \|u \|_{L^{\frac{2q}{q-2}}} (\| u_t \|_{L^4}^{2}+\|w_t\|_{L^4}^2) \nonumber\\
\leq~&Ct \bar \rho^{-\frac14} \|\nabla \rho\|_{L^q} \|u \|_{L^{2}}^{\frac{q-3}{q}} \|\nabla u \|_{L^{2}}^{\frac{3}{q}} \left(\|\sqrt{\rho} u_t \|_{L^2}^{\frac12} \|\nabla u_t \|_{L^2}^{\frac32}+\|\sqrt{\rho}w_t\|_{L^2}^{\frac{1}{2}}\|\D w_t \|_{L^2}^{\frac{3}{2}}\right) \nonumber\\
\le~& \frac{\bar{\mu}}{10}t\bar{\rho}^\alpha\|\D u_t\|_{L^2}^2+\frac{\bar{\eta}}{10}t\bar{\rho}^\alpha\|\D w_t\|_{L^2}^2+Ct\bar{\rho}^{-3\alpha-1}\|\D\rho\|_{L^q}^4\|\D u\|_{L^{2}}^{\frac{12}{q}}(\|\sqrt{\rho}u_t \|_{L^2}^2+\|\sqrt{\rho}w_t\|_{L^2}^2) \nonumber\\
\leq~& \frac{\bar\mu}{10} t \bar\rho^{\alpha} \|\nabla u_t\|_{L^2}^2+\frac{\bar\eta}{10} t \bar\rho^{\alpha} \|\nabla w_t\|_{L^2}^2+Ct\bar{\rho}^{-3\alpha-1}\|\D u\|_{L^{2}}^{\frac{12}{q}}(\|\sqrt{\rho}u_t \|_{L^2}^2+\|\sqrt{\rho}w_t\|_{L^2}^2),
\end{align}
\begin{align}\label{J23}
J_2+J_3\leq~& Ct\int(|\rho\nabla u||u_t|^2+|\rho\D w||u_t||w_t|)\dif x \nonumber \\
\leq~&Ct\bar\rho^{\frac{1}{2}}\|\sqrt{\rho}u_t\|_{L^2}(\|\D u\|_{L^3}\|u_t\|_{L^6}+\|\D w\|_{L^3}\|w_t\|_{L^6}) \nonumber \\
\leq~&Ct\bar\rho^{\frac{1}{2}}\|\sqrt{\rho}u_t\|_{L^2}(\|\D u\|_{L^2}^{\frac{1}{2}}\|\D u\|_{L^6}^{\frac{1}{2}}\|\D u_t\|_{L^2}+\|\D w\|_{L^2}^{\frac{1}{2}}\|\D w\|_{L^6}^{\frac{1}{2}}\|\D w_t\|_{L^2}) \nonumber \\
\leq~& \frac{\bar{\mu}}{10}t\bar{\rho}^\alpha\|\D u_t\|_{L^2}^2+\frac{\bar{\eta}}{10}t\bar{\rho}^\alpha\|\D w_t\|_{L^2}^2 +Ct\bar{\rho}^{1-\alpha}\|\sqrt{\rho}u_t\|_{L^2}^2(\|\nabla u\|_{L^2} \|\nabla u\|_{H^1}+\|\nabla w\|_{L^2}\|\nabla w\|_{H^1}) \nonumber \\
\leq~& \frac{\bar{\mu}}{10}t\bar{\rho}^\alpha\|\D u_t\|_{L^2}^2+\frac{\bar{\eta}}{10}t\bar{\rho}^\alpha\|\D w_t\|_{L^2}^2+Ct\bar\rho^{\frac{3}{2}-2\alpha}\|\nabla u\|_{L^2} \|\sqrt{\rho}u_t\|_{L^2}^3+Ct\bar\rho^{3-3\alpha}\|\nabla u\|_{L^2}^4 \|\sqrt{\rho}u_t\|_{L^2}^2 \nonumber\\
&+Ct\bar\rho^{1+\beta-2\alpha} \|\nabla u\|_{L^2}\|\nabla w\|_{L^2} \|\sqrt{\rho} u_t \|_{L^2}^2+Ct\bar\rho^{\frac{3}{2}-2\alpha}\|\nabla w\|_{L^2}\|\sqrt{\rho}u_t\|_{L^2}^2\|\sqrt{\rho} w_t\|_{L^2} \nonumber \\
&+Ct\bar\rho^{3-3\alpha} \|\nabla u\|_{L^2}^2\|\nabla w\|_{L^2}^2 \|\sqrt{\rho} u_t \|_{L^2}^2,
\end{align}
where we have used \eqref{H2} and \eqref{H2w}. Similarly, we have
\begin{align}\label{J45}
J_4+J_5\leq~&Ct\int|\D\rho||u|^2(|\D u||u_t|+|\D w||w_t|)\dif x \nonumber \\
\leq~& Ct\|\D\rho\|_{L^q}\|u\|_{L^6}^2\left(\|\D u\|_{L^{\frac{2q}{q-2}}}\|u_t\|_{L^6}+\|\D w\|_{L^{\frac{2q}{q-2}}}\|w_t\|_{L^6}\right) \nonumber \\
\leq~& \frac{\bar{\mu}}{10}t\bar{\rho}^\alpha\|\D u_t\|_{L^2}^2+\frac{\bar{\eta}}{10}t\bar{\rho}^\alpha\|\D w_t\|_{L^2}^2+ Ct \bar\rho^{ - \alpha} \|\nabla \rho \|_{L^q}^2 \|\nabla u\|_{L^2}^\frac{6(q-1)}{q}\|\nabla u\|_{H^1}^\frac{6}{q} \nonumber \\
&+Ct\bar{\rho}^{-\alpha}\|\D\rho\|_{L^q}^2\|\D u\|_{L^2}^4\|\D w\|_{L^2}^{2-\frac{6}{q}}\|\D w\|_{H^1}^\frac{6}{q} \nonumber\\
\leq~& \frac{\bar{\mu}}{10}t\bar{\rho}^\alpha\|\D u_t\|_{L^2}^2+\frac{\bar{\eta}}{10}t\bar{\rho}^\alpha\|\D w_t\|_{L^2}^2+Ct\bar\rho^{ -\alpha+\frac{6}{q}(\frac{1}{2}-\alpha)}\|\D u\|_{L^2}^{\frac{6(q-1)}{q}}\|\sqrt{\rho}u_t\|_{L^2}^{\frac{6}{q}}\nonumber\\
&+Ct\rho^{-\alpha+2(1-\alpha)\frac{6}{q}}\|\nabla u\|_{L^2}^{6+\frac{12}{q}}+Ct\bar\rho^{-\alpha+\frac{6}{q}(\beta-\alpha)}\|\D u\|_{L^2}^\frac{6(q-1)}{q}\|\D w\|_{L^2}^{\frac{6}{q}} \nonumber\\
&+Ct\bar\rho^{-\alpha+\frac{6}{q}(\frac{1}{2}-\alpha)}\|\D u\|_{L^2}^4\|\D w\|_{L^2}^{2-\frac{6}{q}}\|\sqrt{\rho}w_t\|_{L^2}^{\frac{6}{q}}\nonumber\\
&+Ct\rho^{-\alpha+2(1-\alpha)\frac{6}{q}}\|\nabla u\|_{L^2}^{4+\frac{12}{q}}\|\nabla w\|_{L^2}^2+Ct\bar\rho^{-\alpha+\frac{6}{q}(\beta-\alpha)}\|\D u\|_{L^2}^{4+\frac{6}{q}}\|\D w\|_{L^2}^{2-\frac{6}{q}} \nonumber\\
\leq~& \frac{\bar{\mu}}{10}t\bar{\rho}^\alpha\|\D u_t\|_{L^2}^2+\frac{\bar{\eta}}{10}t\bar{\rho}^\alpha\|\D w_t\|_{L^2}^2+Ct\left(\bar{\rho}^{ -\alpha+2(1-\alpha)\frac{6}{q}}+\bar{\rho}^{-\alpha+\frac{6}{q}(\beta-\alpha)}\right)\|\nabla u\|_{L^2}^4 \\
&+Ct\bar\rho^{ -\alpha+\frac{6}{q}(\frac{1}{2}-\alpha)}\left(\|\nabla u\|_{L^2}^2(\|\sqrt{\rho} u_t\|_{L^2}^2+\|\sqrt{\rho} w_t\|_{L^2}^2)+\|\nabla u\|_{L^2}^{4+\frac{6}{q-3}}(\|\nabla u\|_{L^2}^2+\|\nabla w\|_{L^2}^2)\right), \nonumber
\end{align}
\begin{align}\label{J678910}
&J_6+J_7+J_8+J_9+J_{10} \nonumber\\
\leq~&Ct\bar{\rho}^{\alpha-1}\int|u||\D\rho|\left(|w||w_t|+|\D w||\D w_t|+|\D u||\D u_t|+|\D w||u_t|+|\D u||w_t|\right)\dif x\nonumber\\
\leq~&Ct\bar{\rho}^{\alpha-1}\|\D\rho\|_{L^q}\|u\|_{L^\frac{3q}{q-3}}\left(\|\D u\|_{L^6}\|\D u_t\|_{L^2}+\|\D w\|_{L^6}\|\D w_t\|_{L^2}+\|\D u\|_{L^2}\|w_t\|_{L^6}+\|\D w\|_{L^2}\|u_t\|_{L^6}\right) \nonumber\\
\leq~&\frac{\bar{\mu}}{10}t\bar{\rho}^\alpha\|\D u_t\|_{L^2}^2+\frac{\bar{\eta}}{10}t\bar{\rho}^\alpha\|\D w_t\|_{L^2}^2+Ct\bar \rho^{\alpha-2} \|\nabla \rho \|_{L^q}^2  \|\nabla u\|_{L^2}^{3-\frac{6}{q}} \|\nabla u\|_{H^1}^{\frac{6}{q}-1}(\|\D u\|_{H^1}^2+\|\D w\|_{H^1}^2) \nonumber\\
\leq~&\frac{\bar{\mu}}{10}t\bar{\rho}^\alpha\|\D u_t\|_{L^2}^2+\frac{\bar{\eta}}{10}t\bar{\rho}^\alpha\|\D w_t\|_{L^2}^2+Ct\bar{\rho}^{-\frac{3}{2}+(\frac{1}{2}-\alpha)\frac{6}{q}}\|\nabla u\|_{L^2}^{3-\frac{6}{q}}\|\sqrt{\rho} u_t\|_{L^2}^{1+\frac{6}{q}}+Ct \bar\rho^{-\alpha+2(1-\alpha)\frac{6}{q}}\|\nabla u\|_{L^2}^4 \nonumber\\
&+Ct\bar{\rho}^{\beta-2+(\beta-\alpha)\frac{6}{q}}\|\nabla u\|_{L^2}^{3-\frac{6}{q}}\|\D w\|_{L^2}^{1+\frac{6}{q}}+Ct\bar{\rho}^{-\frac{3}{2}+(\frac{1}{2}-\alpha)\frac{6}{q}}\|\D u\|_{L^2}^{3-\frac{6}{q}}\|\sqrt{\rho}u_t\|_{L^2}^{\frac{6}{q}-1}\|\sqrt{\rho}w_t\|_{L^2}^2 \nonumber\\
&+Ct\bar{\rho}^{\beta-2+(\beta-\alpha)\frac{6}{q}}\|\nabla u\|_{L^2}^{5-\frac{6}{q}}\|\D w\|_{L^2}^{\frac{6}{q}-1}.
\end{align}
Next, taking $M_3$ large enough, such that
\begin{equation*}
C\bar{\rho}^\beta\leq\frac{1}{10}\min\{\bar{\mu}, \bar{\eta}\}\bar{\rho}^\alpha,
\end{equation*}
we have
\begin{align}\label{J11}
J_{11}\leq~&Ct\bar{\rho}^\beta(\|u_t\|_{L^2}\|\D w_t\|_{L^2}+\|w_t\|_{L^2}\|\D u_t\|_{L^2})\nonumber\\
\leq~&Ct\bar{\rho}^\beta\|\D u_t\|_{L^2}\|\D w_t\|_{L^2} \nonumber\\
\leq~&Ct\bar{\rho}^\beta(\|\D u_t\|_{L^2}^2+\|\D w_t\|_{L^2}^2) \nonumber\\
\leq~&\frac{\bar{\mu}}{10}t\bar{\rho}^\alpha\|\D u_t\|_{L^2}^2+\frac{\bar{\eta}}{10}t\bar{\rho}^\alpha\|\D w_t\|_{L^2}^2.
\end{align}

Substituting \eqref{J1}--\eqref{J11} into \eqref{k2}, then using \eqref{a1} and \eqref{basic-est}, we deduce
\begin{align}\label{k3}
&\frac{\dif}{\dif t}\left(t\left(\|\sqrt{\rho}u_t\|_{L^2}^2+\|\sqrt{\rho}w_t\|_{L^2}^2\right)\right) 
+t\bar\rho^{\alpha}\left(\bar{\mu}\|\D u_t\|_{L^2}^2+\bar{\eta}\|\D w_t\|_{L^2}^2\right) \nonumber\\
\leq~& \|\sqrt{\rho}u_t\|_{L^2}^2+\|\sqrt{\rho}w_t\|_{L^2}^2+Ct\left(\bar{\rho}^{\frac{3}{2}-2\alpha}+\bar{\rho}^{-\frac{3}{2}+(\frac{1}{2}-\alpha)\frac{6}{q}}\right)\left(\|\D u\|_{L^2}+\|\D w\|_{L^2}\right)\left(\|\sqrt{\rho}u_t \|_{L^2}^3+\|\sqrt{\rho}w_t\|_{L^2}^3\right) \nonumber\\
&+Ct\left(\bar{\rho}^{3-3\alpha}+\bar{\rho}^{1+\beta-2\alpha}+\bar{\rho}^{ -\alpha+\frac{6}{q}(\frac{1}{2}-\alpha)}\right)\left(\|\D u\|_{L^2}^2+\|\D w\|_{L^2}^2\right)\left(\|\sqrt{\rho}u_t \|_{L^2}^2+\|\sqrt{\rho}w_t\|_{L^2}^2\right) \nonumber\\
&+Ct\left(\bar{\rho}^{ -\alpha+2(1-\alpha)\frac{6}{q}}+\bar{\rho}^{-\alpha+\frac{6}{q}(\beta-\alpha)}+\bar{\rho}^{ -\alpha+\frac{6}{q}(\frac{1}{2}-\alpha)}+\bar{\rho}^{\beta-2+(\beta-\alpha)\frac{6}{q}}\right)\left(\|\nabla u\|_{L^2}^4+\|\nabla w\|_{L^2}^4\right),
\end{align}
due to $q\in (3,6)$ and \eqref{a1}. Thus, Gronwall's inequality yields
\begin{align}\label{k4}
&\sup_{0\le t\le T}t\left(\|\sqrt{\rho}u_t\|_{L^2}^2+\|\sqrt{\rho}w_t\|_{L^2}^2\right) +  \bar\rho^\alpha\int_{0}^{T} t\left(\|\nabla u_t\|^2_{L^2}+\|\nabla w_t\|^2_{L^2}\right)\dif t \nonumber\\
\leq~& C\int_0^T \bigg(t\left(\bar{\rho}^{ -\alpha+2(1-\alpha)\frac{6}{q}}+\bar{\rho}^{-\alpha+\frac{6}{q}(\beta-\alpha)}+\bar{\rho}^{ -\alpha+\frac{6}{q}(\frac{1}{2}-\alpha)}+\bar{\rho}^{\beta-2+(\beta-\alpha)\frac{6}{q}}\right)\left(\|\nabla u\|_{L^2}^4+\|\nabla w\|_{L^2}^4\right) \nonumber\\
&+\|\sqrt{\rho}u_t\|_{L^2}^2+\|\sqrt{\rho}w_t\|_{L^2}^2 \bigg) \dif t \nonumber\\
&\cdot\exp{\left\{\left(\bar{\rho}^{\frac{3}{2}-2\alpha}+\bar{\rho}^{-\frac{3}{2}+(\frac{1}{2}-\alpha)\frac{6}{q}}\right)\int_0^T \left(\|\D u\|_{L^2}+\|\D w\|_{L^2}\right)\left(\|\sqrt{\rho}u_t \|_{L^2}+\|\sqrt{\rho}w_t\|_{L^2}\right)\dif t\right\}} \nonumber\\
&\cdot\exp{\left\{ \left(\bar{\rho}^{3-3\alpha}+\bar{\rho}^{1+\beta-2\alpha}+\bar{\rho}^{ -\alpha+\frac{6}{q}(\frac{1}{2}-\alpha)}\right)\int_0^T\left(\|\D u\|_{L^2}^2+\|\D w\|_{L^2}^2\right)\dif t\right\}}.
\end{align}

Now we estimate the terms on the right hand side of \eqref{k4}. By \eqref{a1}, \eqref{basic-est} and \eqref{tdu}, we have
\begin{equation}\label{111}
\int_0^T t\left(\|\nabla u\|_{L^2}^4+\|\nabla w\|_{L^2}^4\right) \dif t\leq\sup_{t\in[0,T]}t(\|\D u \|_{L^2}^2+\|\D w\|_{L^2}^2)\int_0^T(\|\D u \|_{L^2}^2+\|\D w\|_{L^2}^2)\dif t\leq C\bar{\rho}^{2(1-\alpha)}.
\end{equation}
Next, H\"older's inequality yields
\begin{align}\label{113}
&\int_0^T \left(\|\D u\|_{L^2}+\|\D w\|_{L^2}\right)\left(\|\sqrt{\rho}u_t \|_{L^2}+\|\sqrt{\rho}w_t\|_{L^2}\right)\dif t \nonumber\\
\leq~&\left(\int_0^T(\|\D u \|_{L^2}^2+\|\D w\|_{L^2}^2)\dif t\right)^{\frac{1}{2}}\left(\int_0^T\left(\|\sqrt{\rho}u_t \|_{L^2}^2+\|\sqrt{\rho}w_t\|_{L^2}^2\right)\dif t\right)^{\frac{1}{2}}\leq C\bar\rho^{\frac{1}{2}}.
\end{align}
Inserting \eqref{111}--\eqref{113} into \eqref{k4}, one gets
\begin{equation}\label{trut1}
\sup_{0\le t\le T}t\left(\|\sqrt{\rho}u_t\|_{L^2}^2+\|\sqrt{\rho}w_t\|_{L^2}^2\right) +  \bar\rho^\alpha\int_{0}^{T} t\left(\|\nabla u_t\|^2_{L^2}+\|\nabla w_t\|^2_{L^2}\right)\dif t\leq C\bar{\rho}^\alpha\exp{\{C\bar{\rho}^{-A}\}}\leq C\bar{\rho}^\alpha.
\end{equation}
for some constant $A>0$. This completes the proof of \eqref{trut}.

On the other hand, multiplying \eqref{k3} by $t$, one has
\begin{align}
&\frac{\dif}{\dif t}\left(t^2\left(\|\sqrt{\rho}u_t\|_{L^2}^2+\|\sqrt{\rho}w_t\|_{L^2}^2\right)\right) 
+t^2\bar\rho^{\alpha}\left(\bar{\mu}\|\D u_t\|_{L^2}^2+\bar{\eta}\|\D w_t\|_{L^2}^2\right) \nonumber\\
\leq~& Ct(\|\sqrt{\rho}u_t\|_{L^2}^2+\|\sqrt{\rho}w_t\|_{L^2}^2)+Ct^2\left(\bar{\rho}^{\frac{3}{2}-2\alpha}+\bar{\rho}^{-\frac{3}{2}+(\frac{1}{2}-\alpha)\frac{6}{q}}\right)\left(\|\D u\|_{L^2}+\|\D w\|_{L^2}\right)\left(\|\sqrt{\rho}u_t \|_{L^2}^3+\|\sqrt{\rho}w_t\|_{L^2}^3\right) \nonumber\\
&+Ct^2\left(\bar{\rho}^{3-3\alpha}+\bar{\rho}^{1+\beta-2\alpha}+\bar{\rho}^{ -\alpha+\frac{6}{q}(\frac{1}{2}-\alpha)}\right)\left(\|\D u\|_{L^2}^2+\|\D w\|_{L^2}^2\right)\left(\|\sqrt{\rho}u_t \|_{L^2}^2+\|\sqrt{\rho}w_t\|_{L^2}^2\right) \nonumber\\
&+Ct^2\left(\bar{\rho}^{ -\alpha+2(1-\alpha)\frac{6}{q}}+\bar{\rho}^{-\alpha+\frac{6}{q}(\beta-\alpha)}+\bar{\rho}^{ -\alpha+\frac{6}{q}(\frac{1}{2}-\alpha)}+\bar{\rho}^{\beta-2+(\beta-\alpha)\frac{6}{q}}\right)\left(\|\nabla u\|_{L^2}^4+\|\nabla w\|_{L^2}^4\right).
\end{align}
Applying Gronwall’s inequality yields
\begin{align}
&\sup_{0\le t\le T}t^2\left(\|\sqrt{\rho}u_t\|_{L^2}^2+\|\sqrt{\rho}w_t\|_{L^2}^2\right) +  \bar\rho^\alpha\int_{0}^{T} t^2\left(\|\nabla u_t\|^2_{L^2}+\|\nabla w_t\|^2_{L^2}\right)\dif t \nonumber\\
\leq~& C\int_0^T \bigg(t^2\left(\bar{\rho}^{ -\alpha+2(1-\alpha)\frac{6}{q}}+\bar{\rho}^{-\alpha+\frac{6}{q}(\beta-\alpha)}+\bar{\rho}^{ -\alpha+\frac{6}{q}(\frac{1}{2}-\alpha)}+\bar{\rho}^{\beta-2+(\beta-\alpha)\frac{6}{q}}\right)\left(\|\nabla u\|_{L^2}^4+\|\nabla w\|_{L^2}^4\right) \nonumber\\
&+t(\|\sqrt{\rho}u_t\|_{L^2}^2+\|\sqrt{\rho}w_t\|_{L^2}^2)\bigg) \dif t \cdot\exp{\{\bar\rho^{-A}\}} \nonumber\\
\leq~&C\bar\rho\cdot\exp{\{\bar\rho^{-A}\}} \nonumber\\
\leq~&C\bar{\rho},
\end{align}
which completes the proof of \eqref{ttrut}.
\end{proof}

Finally, we are in a position to close the a priori assumption of  $\mathcal{E}_\rho$, the key observation is that $\|\nabla u\|_{L^1_tL^\infty_x}$ is uniformly bounded with respect to time $T$.
\begin{lemma}\label{L_1}
Under the conditions of Proposition \ref{pr}, there exists a positive constant $M_4$ such that
\begin{equation}
\mathcal{E}_\rho(T)\le  2 \mathcal{E}_\rho(0),
\end{equation}
provided $\bar{\rho}\geq M_4(\Omega,C_0,\bar\mu,\bar\xi,\bar\eta, \bar\lambda,\alpha,\beta,\|\D\rho_0\|_{L^q},\|u_0\|_{H^2},\|w_0\|_{H^2})$.
\end{lemma}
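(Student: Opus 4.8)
The plan is to apply the transport equation structure to $\nabla\rho$ and use the uniform bound on $\|\nabla u\|_{L^1_tL^\infty_x}$ to close the estimate for $\mathcal{E}_\rho$. First, I would differentiate the continuity equation $\rho_t+u\cdot\nabla\rho=0$ (using $\operatorname{div}u=0$) with respect to $x_i$ to obtain
\begin{equation*}
\partial_t(\partial_i\rho)+u\cdot\nabla(\partial_i\rho)=-\partial_i u\cdot\nabla\rho.
\end{equation*}
Multiplying by $q|\nabla\rho|^{q-2}\partial_i\rho$, summing in $i$, integrating over $\Omega$, and using that the transport term integrates to zero by incompressibility, one gets
\begin{equation*}
\frac{\dif}{\dif t}\|\nabla\rho\|_{L^q}\le C\|\nabla u\|_{L^\infty}\|\nabla\rho\|_{L^q}.
\end{equation*}
Gronwall's inequality then gives $\mathcal{E}_\rho(T)=\sup_{[0,T]}\|\nabla\rho\|_{L^q}\le\mathcal{E}_\rho(0)\exp\{C\int_0^T\|\nabla u\|_{L^\infty}\dif t\}$, so everything reduces to showing $\int_0^T\|\nabla u\|_{L^\infty}\dif t\le C\bar\rho^{-A}$ for some $A>0$, which forces the exponential factor below $2$ once $\bar\rho\ge M_4$.

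The core of the argument is therefore the bound on $\|\nabla u\|_{L^1_tL^\infty_x}$. I would split $\int_0^T\|\nabla u\|_{L^\infty}\dif t=\int_0^1+\int_1^T$ and use the Gagliardo–Nirenberg/Sobolev embedding $\|\nabla u\|_{L^\infty}\le C\|\nabla u\|_{W^{1,q}}\le C\|u\|_{W^{2,q}}$ (valid since $q>3$), then invoke the Stokes estimate \eqref{W2q}:
\begin{equation*}
\|u\|_{W^{2,q}}\le C\Big(\bar\rho^{-\alpha}\|\rho u_t\|_{L^q}+\bar\rho^{(1-\alpha)\frac{5q-6}{q}}\|\nabla u\|_{L^2}^{\frac{6(q-1)}{q}}+\bar\rho^{\beta-\alpha}\|\nabla\times w\|_{L^q}\Big).
\end{equation*}
For the $\|\rho u_t\|_{L^q}$ term I would interpolate $\|u_t\|_{L^q}\le C\|u_t\|_{L^2}^{\theta}\|\nabla u_t\|_{L^2}^{1-\theta}$ (with the appropriate $\theta$ for $q\in(3,6)$), so that, using $\|u_t\|_{L^2}\le\bar\rho^{-1/2}\|\sqrt\rho u_t\|_{L^2}$, one controls $\|\rho u_t\|_{L^q}$ by a combination of $\|\sqrt\rho u_t\|_{L^2}$ and $\|\nabla u_t\|_{L^2}$ with explicit $\bar\rho$-powers; for $\|\nabla\times w\|_{L^q}$ I would similarly interpolate between $\|\nabla w\|_{L^2}$ and $\|\nabla w\|_{H^1}=\|w\|_{H^2}$, then substitute \eqref{H2w}, \eqref{W2qw}. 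The near-$t=0$ piece $\int_0^1$ is handled using the non-time-weighted estimates from Lemma \ref{L_2} and Lemma \ref{lem:basic-est} together with a Hölder in $t$ (the possible $t^{-1/2}$-type singularity from the time-weighted bounds is integrable near $0$); the tail $\int_1^T$ is handled using the time-weighted estimates \eqref{t2du}, \eqref{ttrut} and the exponential decay estimate \eqref{etdu}, which supply enough powers of $t$ (or enough exponential smallness in $t$) to make the time-integral converge with a final bound proportional to a negative power of $\bar\rho$.

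The main obstacle I anticipate is bookkeeping the $\bar\rho$-exponents so that every contribution to $\int_0^T\|\nabla u\|_{L^\infty}\dif t$ carries a strictly negative power of $\bar\rho$ under the hypotheses $\alpha>1$, $0<\beta\le\frac{\alpha+1}{2}$. Each term coming out of \eqref{W2q} after interpolation produces a monomial in $\bar\rho$ whose exponent is a nontrivial combination of $\alpha$, $\beta$, $q$, and the interpolation weights; one must check that the worst of these (likely the terms involving $\bar\rho^{\beta-\alpha}$ or the $\bar\rho^{(1-\alpha)\frac{5q-6}{q}}$ factor times time-weighted $L^2$-norms of $\nabla u$, $\nabla w$) is still negative. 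The constraint $\beta\le\frac{\alpha+1}{2}$ should be exactly what is needed to tame the coupling terms $\bar\rho^{\beta-\alpha}$, while $\alpha>1$ handles the nonlinear self-interaction terms; combining with the factor $\bar\rho^{\alpha}$ or $\bar\rho$ from the integrated time-weighted estimates should leave a net negative exponent. Once that is verified, choosing $M_4$ large enough that $C\bar\rho^{-A}\le\ln 2$ finishes the proof.
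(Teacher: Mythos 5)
Your overall scheme is the paper's: differentiate the continuity equation, test against $|\nabla\rho|^{q-2}\nabla\rho$, Gronwall, and reduce everything to a bound of the form $\int_0^T\|\nabla u\|_{L^\infty}\,\dif t\le C\bar\rho^{-A}$ obtained from the Stokes $W^{2,q}$-estimate together with the time-weighted bounds of Lemma \ref{L_2}, split near $t=0$ and for $t\ge 1$. The one structural deviation is the treatment of the coupling term $\bar\rho^{\beta-\alpha}\|\nabla\times w\|_{L^q}$ in \eqref{W2q}: the paper simply adds \eqref{W2q} and \eqref{W2qw} and, since $\beta<\alpha$ (which follows from $\alpha>1$ and $\beta\le\frac{\alpha+1}{2}$), absorbs both cross terms $\bar\rho^{\beta-\alpha}\|\nabla\times w\|_{L^q}$ and $\bar\rho^{\beta-\alpha}\|\nabla u\|_{L^q}$ into the left-hand side, arriving at \eqref{W2quw} with no coupling term at all. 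Your interpolation of $\|\nabla\times w\|_{L^q}$ between $\|\nabla w\|_{L^2}$ and $\|w\|_{H^2}$ followed by \eqref{H2w} would also work, but be careful not to substitute \eqref{W2qw} there: that brings $\|\nabla u\|_{L^q}$ back on the right and you would have to absorb anyway, so the paper's one-step absorption is cleaner. Also, \eqref{etdu} is not needed for the tail $\int_1^T$; the $t^2$-weight in \eqref{ttrut} already makes $t^{-4q/(q+6)}$ integrable on $[1,\infty)$ since $4q/(q+6)>1$ for $q>2$.

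The genuine gap is that you stop exactly where the proof actually lives. You correctly identify that every term feeding into $\int_0^T\|\nabla u\|_{L^\infty}\,\dif t$ must carry a strictly negative power of $\bar\rho$, and that this is where $\alpha>1$ and $\beta\le\frac{\alpha+1}{2}$ enter, but you do not carry out the computation or exhibit a single exponent. The paper obtains two explicit contributions, $C\bar\rho^{\frac{5q-6}{4q}+\alpha\frac{6-q}{4q}-\alpha}$ (from the $\rho u_t,\rho w_t$ terms via Gagliardo--Nirenberg and the $t$- and $t^2$-weighted estimates \eqref{trut}, \eqref{ttrut}) and $C\bar\rho^{\frac{6(q-1)}{q}(1-\alpha)}$ (from the nonlinear term via \eqref{basic-est} and the a priori bound \eqref{a1}), both negative precisely because $\alpha>1$, yielding $D=\min\{\frac{5q-6}{4q}(\alpha-1),\frac{6(q-1)}{q}(\alpha-1)\}>0$. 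Without that computation your argument is a plan, not a proof: there is nothing in your write-up that would detect an error in the powers if one existed, and you have not verified that your alternate route through \eqref{H2w} does not pick up a worse exponent than the paper's (it doesn't, but that has to be checked). To complete the lemma you must run the Gagliardo--Nirenberg interpolation for $\|\rho u_t\|_{L^q}$ and $\|\rho w_t\|_{L^q}$ with explicit $t$-weights, integrate, and sum, keeping track of every $\bar\rho$-power.
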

\begin{proof}
Applying the operator $\nabla$ to \eqref{ins}$_1$, we get  
	\begin{equation}\label{L11}
		\nabla \rho_t + u \cdot \nabla^2 \rho + \nabla u \cdot \nabla \rho =0.
	\end{equation}
Making the $L^2$-inner product of \eqref{L11} with $|\nabla \rho|^{q-2}\nabla \rho$ and then integrating by parts, we have 
\begin{equation}\label{dr}
\frac{1}{p}\frac{\dif}{\dif t} \|\nabla \rho\|_{L^q}^q  = - \int \nabla \rho \cdot \nabla u \cdot \nabla \rho |\nabla \rho|^{q-2} \dif x
\le C \|\nabla u\|_{L^\infty} \|\nabla \rho \|_{L^q}^q.
\end{equation}
It follows from Gronwall's inequality that
\begin{equation}\label{L12}
\|\D\rho\|_{L^q}^q\leq\|\D\rho_0\|_{L^q}^q\cdot\exp\Big\{C\int_0^T\|\D u\|_{L^\infty}\dif t\Big\}.
\end{equation}
Therefore, if we can prove the following estimate
\begin{equation}\label{Du1infty}
\int_0^T\|\D u\|_{L^\infty}\dif t\leq C\bar{\rho}^{-B},
\end{equation}
for some constant $B>0$, then by taking $M_4$ sufficiently large such that $C\bar{\rho}^{-B}\leq\ln 2$, we complete the proof of Lemma \ref{L_1}.

Adding \eqref{W2q} and \eqref{W2qw} together, we get
\begin{equation}\label{W2quw}
\|u\|_{W^{2,q}}+\|w\|_{W^{2,q}}\leq C\left(\bar\rho^{-\alpha}(\|\rho u_t\|_{L^q}+\|\rho w_t\|_{L^q}) +  \bar \rho^{(1-\alpha)\frac{5q-6}{q}} \|\nabla u\|_{L^2}^{\frac{5q-6}{q}}(\|\D u\|_{L^2}+\|\D w\|_{L^2})\right).
\end{equation}
It follows from \eqref{basic-est}, \eqref{Euw} and \eqref{W2quw} that
\begin{align}\label{kkk}
&\int_0^T\|\D u\|_{L^\infty}\dif t\leq C\int_0^T\|\D u\|_{W^{1,q}}\dif t \nonumber\\
\leq~& C\bar{\rho}^{-\alpha}\int_0^T\left(\|\rho u_t\|_{L^q}+\|\rho w_t\|_{L^q}\right)\dif t+C\bar{\rho}^{(1-\alpha)\frac{5q-6}{q}}\int_0^T\|\nabla u\|_{L^2}^{\frac{5q-6}{q}}(\|\D u\|_{L^2}+\|\D w\|_{L^2})\dif t \nonumber\\
\leq~& C\bar{\rho}^{-\alpha}\int_0^T\left(\|\rho u_t\|_{L^q}+\|\rho w_t\|_{L^q}\right)\dif t+C\bar{\rho}^{(1-\alpha)\frac{5q-6}{q}}\int_0^T\|\nabla u\|_{L^2}^2\dif t \nonumber\\
\leq~& C\bar{\rho}^{-\alpha}\int_0^T\left(\|\rho u_t\|_{L^q}+\|\rho w_t\|_{L^q}\right)\dif t+C\bar{\rho}^{\frac{6(q-1)}{q}(1-\alpha)}.
\end{align}
Using Gagliardo-Nirenberg inequality, \eqref{trut} and \eqref{ttrut}, we have 
\begin{align}\label{long}
\int_{0}^{T}\left(\|\rho u_t\|_{L^q}+\|\rho w_t\|_{L^q}\right)\dif t\leq~& C\bar\rho^{\frac{5q-6}{4q}}\int_{0}^{T}\left(\|\sqrt{\rho}u_t\|_{L^2}^{\frac{6-q}{2q}} \|\nabla u_t\|_{L^2}^{\frac{3(q-2)}{2q}}+\|\sqrt{\rho}w_t\|_{L^2}^{\frac{6-q}{2q}} \|\nabla w_t\|_{L^2}^{\frac{3(q-2)}{2q}}\right)\dif t \nonumber\\
\leq~& C \bar\rho^{\frac{5q-6}{4q}} \left(\sup_{0\le t\le \min\{1,T\}} t \|\sqrt{\rho}u_t\|_{L^2}^2 \dif t\right)^{\frac{6-q}{4q}} \nonumber\\
&\cdot \left( \int_{0}^{\min\{1,T\}} t \|\nabla u_t\|_{L^2}^2 \dif t \right)^{\frac{3(q-2)}{4q}}\left( \int_{0}^{\min\{1,T\}} t^{-\frac{2q}{q+6}}  \dif t \right)^{\frac{q+6}{4q}} \nonumber\\
&+ C \bar\rho^{\frac{5q-6}{4q}} \left(\sup_{\min\{1,T\}\le t\le T} t^2 \|\sqrt{\rho}u_t\|_{L^2}^2 \dif t\right)^{\frac{6-q}{4q}} \nonumber\\
&\cdot\left( \int_{\min\{1,T\}}^{T} t^2 \|\nabla u_t\|_{L^2}^2 \dif t \right)^{\frac{3(q-2)}{4q}} \left( \int_{\min\{1,T\}}^{T} t^{-\frac{4q}{q+6}}  \dif t \right)^{\frac{q+6}{4q}} \nonumber\\
&+C \bar\rho^{\frac{5q-6}{4q}} \left(\sup_{0\le t\le \min\{1,T\}} t \|\sqrt{\rho}w_t\|_{L^2}^2 \dif t\right)^{\frac{6-q}{4q}} \nonumber\\
&\cdot \left( \int_{0}^{\min\{1,T\}} t \|\nabla w_t\|_{L^2}^2 \dif t \right)^{\frac{3(q-2)}{4q}}\left( \int_{0}^{\min\{1,T\}} t^{-\frac{2q}{q+6}}  \dif t \right)^{\frac{q+6}{4q}} \nonumber\\
&+ C \bar\rho^{\frac{5q-6}{4q}} \left(\sup_{\min\{1,T\}\le t\le T} t^2 \|\sqrt{\rho}w_t\|_{L^2}^2 \dif t\right)^{\frac{6-q}{4q}} \nonumber\\
&\cdot\left( \int_{\min\{1,T\}}^{T} t^2 \|\nabla w_t\|_{L^2}^2 \dif t \right)^{\frac{3(q-2)}{4q}} \left( \int_{\min\{1,T\}}^{T} t^{-\frac{4q}{q+6}}  \dif t \right)^{\frac{q+6}{4q}} \nonumber\\
\leq~& C \bar\rho^{\frac{5q-6}{4q} + \alpha \frac{6-q}{4q} } + C \bar\rho^{\frac{5q-6}{4q} + \frac{6-q}{4q} + (1-\alpha) \frac{3(q-2)}{4q}} \nonumber\\
\leq~& C \bar\rho^{\frac{5q-6}{4q} + \alpha \frac{6-q}{4q} }.
\end{align}

Combining \eqref{kkk} and \eqref{long}, we obtain
\begin{equation}
\int_0^T \|\nabla u\|_{L^\infty} \dif t \leq C \bar\rho^{\frac{5q-6}{4q} + \alpha \frac{6-q}{4q} -\alpha } +  C \bar{\rho}^{\frac{6(q-1)}{q}(1-\alpha)}\leq  C \bar\rho^{-D},
\end{equation}
where
\begin{equation*}
D=\min{\left\{\frac{5q-6}{4q}(\alpha-1), \frac{6(q-1)}{q}(\alpha-1)\right\}}.
\end{equation*}
Thus we complete the proof.
\end{proof}

At last, combining Lemma \ref{L_2} and \ref{L_1}, we have proved Proposition \ref{pr}. With the help of Lemma \ref{uniform.rho}--\ref{L_1}, Theorem \ref{global} can be established through a standard bootstrap argument (see \cite{Huang-Li-Zhang-arxiv}). We omit it here.

\bigskip

\noindent {\bf Acknowledgments}\\
This work is supported by Science Foundation of Zhejiang Sci-Tech University (ZSTU) under Grant No. 25062122-Y. The authors would like to express their sincere thanks to Professor Yachun Li for the helpful suggestions and discussions.

\bigskip 
 
\noindent{\bf Data Availability Statements}\\ 
Data sharing not applicable to this article as no datasets were generated or analysed during the current study.

\bigskip

\noindent{\bf Conflict of interests}\\
The authors declare that they have no competing interests.

\bigskip

\noindent{\bf Authors' contributions}\\
The authors have made the same contribution. All authors read and approved the final manuscript.

\bigskip

\end{document}